\documentclass[12pt]{article}
\usepackage{graphicx} 
\usepackage{amsmath}
\usepackage{amsthm, amssymb}
\usepackage{amsfonts}
\usepackage[dvipsnames]{xcolor}
\usepackage{tikz,tikz-network}
\usepackage{tcolorbox}
\usepackage{graphicx}
\usepackage{physics}
\usepackage{dsfont}
\usepackage{xcolor}
\usepackage{appendix}
\usepackage{stmaryrd}
\usepackage{hyperref}
\usepackage{float}
\usepackage{dsfont}
\usepackage{comment}
\usepackage{authblk}
\usepackage{stmaryrd}
\usepackage[a4paper]{geometry}
\usetikzlibrary{shapes.geometric, positioning, calc}
\usepackage{subcaption}

\newtheorem{theorem}{Theorem}[section]
\newtheorem{proposition}[theorem]{Proposition}

\newtheorem{lemma}[theorem]{Lemma}

\newtheorem{remark}[theorem]{Remark}

\renewcommand{\tr}{\mathrm{tr}}
\renewcommand{\Tr}{\mathrm{Tr}}

\newcommand{\R}{\mathbb{R}}
\newcommand{\N}{\mathbb{N}}
\newcommand{\Z}{\mathbb{Z}}

\newcommand{\C}{\mathbb{C}}
\renewcommand{\P}{\mathbb{P}}

\newcommand{\E}{\mathbb{E}}

\newcommand{\F}{\mathbb{F}}
\newcommand{\Hom}{\text{Hom}}

\newcommand{\1}{\mathds{1}}
\newcommand{\g}{\mathbf{g}}
\newcommand{\rn}{\mathbf{R}_n}
\newcommand{\PHF}{\mathbf{PHypF}}

\title{Near optimal spectral gaps for line bundles on hyperbolic three-manifolds}
\author{Michael Magee, Anna Roig-Sanchis, Joe Thomas}

\date{\today}

\begin{document}

\maketitle

\abstract{We prove that there exists a sequence of two-torsion Hermitian line bundles on closed hyperbolic three-manifolds, with volume tending to infinity, and with asymptotically optimal smallest eigenvalue of the Laplacian. The proof stems from recent advances in the program of strongly convergent unitary representations of discrete groups.}

\section{Introduction}

Let $M = \Gamma \backslash \mathbb{H}^3$ be a closed hyperbolic three-manifold. For any Hermitian line bundle $\mathcal{L}$ over $M$ with flat linear connection $d : C^\infty(M;\mathcal{L}) \to C^\infty(M;\mathcal{L}\otimes T^{*}M)$ the Laplacian operator is $\Delta_\mathcal{L}:= d^{*}d$, where $d^{*}$ is the $L^2$-adjoint of $d$ w.r.t. the Hermitian metric. We refer to this data simply as `flat line bundle'. Each such operator has canonical extension to an unbounded operator on $L^2(M;\mathcal{L})$ with discrete non-negative spectrum
\[ 0\leq \lambda_1(\mathcal{L}), \lambda_2(\mathcal{L}), \ldots. \]

We prove the following theorem.
\begin{theorem}\label{thm:main}
There exist a sequence of closed hyperbolic three-manifolds $\{M_i\}_{i=1}^\infty$ and flat Hermitian line bundles $\mathcal{L}_i\to M_i$ such that 
\[
\mathrm{vol}( M_i ) \to \infty
\] and
\[ \lambda_1(\mathcal{L}_i) \to 1 = \inf (\mathrm{spec} (\Delta_{ \mathbb{H}^3} ))\]
as $i \to \infty$.
The $\mathcal{L}_i$ can be taken to be two-torsion ($\mathcal{L}_i\otimes \mathcal{L}_i \cong 1$) and the $M_i$ to be simultaneous covers of any fixed base manifold $M_0$.
\end{theorem}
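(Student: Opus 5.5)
We will realize the line bundles via finite covers together with sign characters, and reduce the spectral statement to strong convergence of random permutation representations. Fix a closed hyperbolic $M_0=\Gamma\backslash\mathbb{H}^3$. For a homomorphism $\phi:\Gamma\to S_n$, the cover $M_\phi$ corresponds to the stabilizer $\Gamma_\phi$ of a point. We want a flat two-torsion bundle on $M_\phi$ whose sections are equivalent to $\Gamma$-equivariant sections of $\mathbb{H}^3\times V$, where $V$ is a representation of $\Gamma$ of dimension $n$ containing no invariant vectors. The natural candidate is the \emph{signed} permutation representation. Take $\tilde\phi:\Gamma\to \{\pm1\}\wr S_n\subset O(n)$, which induces a cover of degree $2n$. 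Alternatively, choose a character $\epsilon:\Gamma_\phi\to\{\pm1\}$ and use $\mathrm{Ind}_{\Gamma_\phi}^{\Gamma}\epsilon$, which is exactly a signed permutation representation $\rho_n$ of dimension $n$. Then $L^2(M_\phi;\mathcal{L}_\epsilon)\cong (L^2(\mathbb{H}^3)\otimes \C^n)^{\Gamma}$ with $\Gamma$ acting through $\rho_n$, and $\Delta_{\mathcal{L}_\epsilon}$ corresponds to $\Delta_{\mathbb{H}^3}\otimes 1$. The bundle is two-torsion since $\epsilon^2=1$. Moreover, $\mathrm{vol}(M_\phi)=n\,\mathrm{vol}(M_0)\to\infty$ as long as $\phi$ has a transitive (or at least one large) orbit; it is enough to restrict to a component.

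The key input will be strong convergence. We need random $\rho_n:\Gamma\to O(n)$ of signed-permutation type such that, for every $z$ in the group algebra $\C[\Gamma]$, $\|\rho_n(z)\|\to\|\lambda_\Gamma(z)\|$ in probability, where $\lambda_\Gamma$ is the left regular representation. For $\Gamma$ a cocompact lattice in $\mathrm{PSL}_2(\C)$ this is not free, so I would invoke the recent results on strong convergence for hyperbolic 3-manifold groups via virtual specialness. By Agol and Wise, $\Gamma$ has a finite-index subgroup embedding in a right-angled Artin group, and random permutation representations of RAAGs strongly converge (Magee–Thomas). Strong convergence passes to subgroups by restriction, and after replacing $M_0$ by a finite cover it is compatible with being a cover of $M_0$. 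The reason for signs is that plain permutation representations always contain the trivial representation, which gives eigenvalue $0$. Twisting by random signs (a random $\{\pm1\}$-valued cocycle, i.e. random signed permutations) kills every invariant vector, while strong convergence to the regular representation still holds. Indeed, random signed permutations of a RAAG remain strongly convergent, being matrix-valued versions of the same polynomial method.

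Next we transfer strong convergence to the spectrum. This is the standard argument of Hide–Magee adapted to $\mathbb{H}^3$. Build a $\Gamma$-invariant smooth partition of unity and a parametrix for $(\Delta-\lambda)^{-1}$ with $\lambda<1$, using a cutoff of the resolvent kernel of $\Delta_{\mathbb{H}^3}$ at distance $R$. Then $(\Delta_{\mathcal L}-\lambda)\,\mathbb{M}=1+\mathbb{L}$, where $\mathbb{L}$ is an operator of the form $\sum_{\gamma\in S}a_\gamma\otimes\rho_n(\gamma)$ with $S$ finite and $a_\gamma$ operators on $L^2$ of a fundamental domain. The same operator with $\lambda_\Gamma$ in place of $\rho_n$ has norm below $1$ for large $R$; this is an explicit estimate on the cutoff error using that $\lambda<1$. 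Strong convergence (with operator coefficients, reduced to finite rank by approximation/compactness) then gives $\|\mathbb{L}\|<1$ with high probability. Hence $\Delta_{\mathcal L}$ has no spectrum below $\lambda$. Letting $\lambda\uparrow1$ and diagonalizing gives the sequence. The upper bound $\lambda_1\le 1+o(1)$ is automatic as the volume grows: test functions supported on large embedded balls are available, since random covers have injectivity radius growing on most of the manifold, or alternatively by Weyl-law type arguments.

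I expect the main obstacle to be the strong convergence input for the signed random representations of $\Gamma$, specifically ensuring it holds for the restriction from the RAAG to $\Gamma$ and with the trivial component removed. The parametrix step is technical but follows the established template.
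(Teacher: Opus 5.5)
\textbf{There is a genuine gap: you assume the paper's main theorem rather than prove it.} Your outer framework agrees with the paper's:
\begin{itemize}
\item A hyperoctahedral representation $\rho_n=\mathrm{Ind}_{\Gamma_\phi}^{\Gamma}\epsilon$ encodes a degree-$n$ cover with a two-torsion flat line bundle. Its Laplacian is equivalent to the twisted Laplacian on the rank-$n$ bundle $E_{\rho_n}$ over the base.
\item Once $\rho_n\to\lambda_\Gamma$ strongly, a Hide--Magee parametrix gives $\lambda_1\to 1$. The paper simply cites Magee--Thomas, Theorem 6.1 for this step.
\item Passing to a finite cover of $M_0$ and restricting from a RAAG is legitimate. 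The paper instead restricts and then induces back to $\Gamma$.
\item Your injectivity-radius argument for the upper bound is unjustified, since these covers are not uniformly random. You do not need it: the weak-containment half of strong convergence gives the upper bound through the same transfer.
\end{itemize}

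The missing input is the existence of hyperoctahedral representations of a RAAG (hence of $\Gamma$) strongly converging to the regular representation. This is exactly Theorem~\ref{thm:strong_convergence_hyp-3-mfold}, and your justifications for it do not hold.
\begin{itemize}
\item Magee--Thomas's RAAG result concerns unitary representations built from Haar unitaries, not permutation or signed permutation matrices. It therefore produces only higher-rank flat unitary bundles, not line bundles.
\item There is no direct ``matrix-valued polynomial method'' for random signed permutation representations of a RAAG. The commutation relations forbid choosing generators independently, so no natural model with trace expansions of the needed rational form is available.
\item The claim that random signs kill invariant vectors while preserving strong convergence is precisely what must be proved, already for free groups.
\end{itemize}

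The paper fills this gap in two stages. First, for $\F_r$ with i.i.d.\ uniform $R_i^n\in B_n$, it uses the Magee--Puder results:
\begin{itemize}
\item $\E\,\Tr\,w(\rn,\rn^*)$ is a ratio of polynomials in $1/n$ of degree at most $r|w|$.
\item For $w=v^k\neq\mathrm{id}$, $\E\,\tr\,w=d(k/2)/n+O(n^{-2})$ if $k$ is even, and $O(n^{-2})$ otherwise.
\end{itemize}
These feed the Chen--Garza-Vargas--Tropp--van Handel polynomial method after positivization. The essential extra check is that the first-order distribution $\nu_1$ is supported in $[-\|P(\g,\g^*)\|,\|P(\g,\g^*)\|]$. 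Its moments are sums of $d(k)$ weighted by the probability that a random product equals some $v^{2k}$.

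Second, the paper bootstraps from free groups to RAAGs and RACGs by the Gao et al.\ induction on the number of edges:
\begin{itemize}
\item Write $G\Gamma=G\Gamma'\ast_{GN(v)}(GN(v)\times\Z)$, and use Green's separability of $GN(v)$ together with Lemma~\ref{lem:Gao} to reduce to $G\Gamma'\times(\text{virtually free})$.
\item Handle the virtually free factor by Gregorac plus Lemma~\ref{lem:finite_index}, since induction from finite index preserves both hyperoctahedrality and strong convergence.
\item Handle the product by exactness of $G\Gamma'$ and Haagerup--Thorbj{\o}rnsen, using that tensor products of hyperoctahedral representations are hyperoctahedral.
\end{itemize}
Without these steps, your proposal assumes the very result the paper needs to prove.
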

\begin{remark}The presence of the line bundles $\mathcal{L}_i$ is a concession to the fact that we expect the result to hold without them, provided we exclude the zero eigenvalue.
\end{remark}
\begin{remark}Theorem \ref{thm:main} can also be stated in the following terms: that there are a sequence of closed hyperbolic three-manifolds with fixed point free involutions and volume tending to infinity such that the bottom of the spectrum of the Laplacian on \emph{odd} functions is asymptotically optimal (one).
\end{remark}
The proof follows the following lines, based on recent developments in the theory of strongly convergent unitary representations of discrete groups (see \cite{magee2025strongconvergenceunitarypermutation, vanhandel2026strongconvergencephenomenon} for general accounts).

For $n\in \N$ let $B_n$  denote the \emph{hyperoctahedral group} of signed permutation matrices of order $n$, namely, the matrices that can be obtained from $n\times n$ permutation matrices by replacing every `1' entry by $\pm 1$; as an abstract group $B_n$ is $S_n \ltimes (\Z / 2 \Z)^n$ with the conjugation action of $S_n$ given by permutation of coordinates.

If $M = \Gamma \backslash \mathbb{H}^3$  is a hyperbolic three-manifold, and $\phi : \Gamma\to B_n$ is a homomorphism, then this data specifies a degree-$n$ covering space $M_\phi$ of $M$ and a two-torsion flat line bundle $\mathcal{L}_\phi\to M_\phi$.

We say that a sequence $\{\phi_i \in \Hom(\Gamma, B_{n_i})\}_{i=1}^\infty$,  strongly converges to the regular representation $\lambda$ of $\Gamma$ if for every $z\in \C[\Gamma]$,
\[
\lim_{i\to\infty} \| \phi_i(z) \| = \| \lambda(z) \|,
\]
where the norms on both sides are operator norms.
We prove the following.
\begin{theorem}\label{thm:strong_convergence_hyp-3-mfold}
Let $\Gamma$ be one of the following groups.
\begin{enumerate}
    \item A right-angled Artin or Coxeter group.
    \item A hyperbolic three-manifold group. 
\end{enumerate}
There exists a sequence of hyperoctahedral representations of $\Gamma$ that strongly converge to its regular representation.
\end{theorem}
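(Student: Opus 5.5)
The plan is to reduce everything to free groups, using the known fact that the relevant groups embed in right-angled Artin groups and that RAAGs have enough finite-index structure. The key input from the strong convergence program is the result of Bordenave--Collins, and its extension by Magee, Puder, van Handel et al. For free groups $F_r$, uniformly random permutation representations, restricted to the nontrivial part, converge strongly to the regular representation. For signed permutations the same holds for the full $n$-dimensional representation. The natural hyperoctahedral representation $\C^n$ of $B_n$ is the nontrivial summand of the $2n$-dimensional permutation representation coming from the action on $\{\pm e_1,\dots,\pm e_n\}$. So random homomorphisms $F_r\to B_n$ give, almost surely, a sequence strongly converging to $\lambda_{F_r}$. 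This settles the free case; equivalently, it can be derived from strong convergence for random $2$-lifts composed with random $n$-covers.

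For right-angled Artin groups I would follow the strategy used for permutation representations of RAAGs (Magee--Thomas, via Collins--Male type arguments and Schur--Weyl duality). Realise $A_\Gamma$ inside a graph product. Replace each vertex generator by a tensor factor: for a vertex set $V$, take $n=\prod_v n_v$ and let generator $v$ act by a random signed permutation on the $v$-th tensor factor. Tensor products of signed permutation matrices are signed permutation matrices, and operators on different factors commute. So commuting generators map to commuting matrices, while for non-adjacent vertices we need asymptotic freeness in the strong sense. The cleanest route is to use the theorem that strong convergence is preserved under tensor products with the identity and under graph products (Chen--Garza-Vargas--Tropp--van Handel polynomial method, or the graph product stability results). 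A RAAG is the graph product of copies of $\Z$. I would prove strong convergence for each $\Z$ factor via random signed permutations with cycles of unbounded length, then invoke graph-product stability. Right-angled Coxeter groups are handled the same way with $\Z/2$ factors, realised by random signed involutions with no fixed points in the action, or embedded as finite-index subgroups of RAAGs (Davis--Januszkiewicz).

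Passing to subgroups and finite-index overgroups is the next step. Strong convergence to the regular representation restricts to subgroups, since the restriction of $\lambda_G$ to $H$ is a multiple of $\lambda_H$ and norms are unchanged. For an overgroup $G\supset H$ of finite index, I would use induction. Induction from $H$ to $G$ preserves strong convergence to the regular representation, and the induction of a signed permutation representation is again a signed permutation representation of degree $[G:H]n$. For hyperbolic three-manifold groups I then use Agol--Wise: every such $\Gamma$ virtually embeds into a RAAG. That is, a finite-index $\Gamma'\le\Gamma$ is a subgroup of some $A_\Gamma$, in fact a virtual retract since it is quasiconvex. So I restrict the RAAG sequence to $\Gamma'$ and induce up to $\Gamma$.

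The main obstacle is the RAAG step. I need genuine strong convergence for the commuting tensor model, i.e. that the norm of any noncommutative polynomial converges to its value in the graph product regular representation. I would first try to deduce this from the known permutation case by lifting, using $B_n\subset S_{2n}$ and noting that $\C^{2n}$ splits into the signed part and the unsigned permutation part. Failing that, I would redo the polynomial method with signed permutation word-moments. The induction step is routine but needs care that $\mathrm{Ind}\,\lambda_H=\lambda_G$ and that norm convergence passes through the matrix-valued coefficients.
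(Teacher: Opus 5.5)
Your Part 2 reduction (restrict a strongly convergent RAAG sequence to a finite-index $\Gamma'\le\Gamma$ that embeds in the RAAG, then induce up to $\Gamma$) is what the paper does, and it is fine. The genuine gap is the RAAG/RACG step, which is the heart of Part 1.

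\textbf{The tensor model fails.} Suppose each vertex generator acts by a signed permutation on its own tensor factor and by the identity on the others. Then \emph{all} generators commute, adjacent or not. So the representation factors through $\Z^{V}$ and cannot converge strongly to $\lambda_{G\Gamma}$ unless the graph is complete. Nothing in the model produces asymptotic freeness of non-adjacent generators.

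\textbf{The stability theorem you invoke is not available.} The `graph-product stability of strong convergence' you then appeal to is not a theorem one can cite for (signed) permutation models. It is essentially the statement to be proved, as you acknowledge.

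\textbf{What the paper does instead.} It uses the edge-deletion iteration of Gao et al.
\begin{enumerate}
\item Split $G\Gamma = G\Gamma' \ast_{GN(v)} (GN(v)\times\Z)$.
\item Use Green's result that $GN(v)$ is an intersection of finite-index subgroups $H_i\le G\Gamma'$.
\item Use the Gao et al.\ lemma, which gives maps $G\Gamma\to G\Gamma'\times\bigl(G\Gamma'/K_i\ast_{H_i/K_i}(H_i/K_i\times\Z)\bigr)$ whose pulled-back regular representations converge strongly to $\lambda_{G\Gamma}$.
\item The second factor is virtually free (Gregorac), so the free case plus finite-index induction handles it.
\item The direct product is handled by exactness of $G\Gamma'$ and Haagerup--Thorbj{\o}rnsen, together with the fact that tensor products of signed permutation matrices are signed permutation matrices.
\item One inducts on the number of edges, down to free groups and free Coxeter groups (which are virtually free).
\end{enumerate}
Also, RACGs are not finite-index subgroups of RAAGs, since they have torsion; Davis--Januszkiewicz goes the other way. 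A virtual embedding plus restriction and induction would suffice once RAAGs are done.

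\textbf{The free-group base case is asserted, not proved.} Your justification does not work. A uniform element of $B_n\subset S_{2n}$ is not a uniform element of $S_{2n}$. So Bordenave--Collins for uniform random permutations says nothing directly about the $2n$-point action of random signed permutations. Moreover, the signed summand $\C^n$ is not `the' nontrivial summand: its complement contains the standard $(n-1)$-dimensional representation of $S_n$, pulled back along $B_n\to S_n$. Rephrasing the model as random $2$-lifts of random $n$-covers restates the problem rather than solving it.

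This base case is exactly the paper's new ingredient. It runs the polynomial method of Chen--Garza-Vargas--Tropp--van Handel, using the Magee--Puder formula that expresses $\E\,\Tr\,w(\rn,\rn^*)$ as a ratio of bounded-degree polynomials in $1/n$. The key point is that the $1/n$ term only receives contributions from even powers $w=v^{2k}$, with weight $d(k)$. This forces $\mathrm{supp}\,\nu_1\subseteq[-\|P(\g,\g^*)\|,\|P(\g,\g^*)\|]$. The lower bound comes from exactness and uniqueness of trace of $C^*_{\mathrm{red}}(\F_r)$, and positivization reduces to polynomials with nonnegative coefficients. Your fallback of `redoing the polynomial method with signed permutation word-moments' points the right way, but it is needed here in the free case, not in the RAAG step.
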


We discuss the proof of this theorem momentarily. Before this, we explain the passage from Theorem \ref{thm:strong_convergence_hyp-3-mfold} (Part 2) to Theorem \ref{thm:main}. This is based on now-standard arguments about inducing strong convergence.
Let $\{ \phi_i \}_{i=1}^\infty$  denote the sequence of hyperoctahedral representations provided by the theorem.
Sections of the associated flat vector bundle $E_{\phi_i}$ to $\phi_i$  over $M$ correspond naturally to sections of the degree two line bundle $\mathcal{L}_{\phi_i}$ over $M_{\phi_i}$, where $M_{\phi_i}$  is a finite cover of $M$. 
Now \cite[Theorem 6.1]{MageeThomas} implies that $\lambda_1(E_{\phi_i}) \to 1$ as $i\to\infty$, but the Laplacian on $E_{\phi_i}$ corresponds to the Laplacian on $\mathcal{L}_{\phi_i}$ under the previously mentioned correspondence between sections.

{\bf Proof of Theorem \ref{thm:strong_convergence_hyp-3-mfold}  on strong convergence.}
Theorem \ref{thm:strong_convergence_hyp-3-mfold} Part 2 follow from Part 1 for RAAGs by the following argument (similarly to the main result of \cite{MageeThomas}): both being hyperoctahedral and strong convergence  are preserved under induction from a finite index subgroup (see Section \ref{sec: bootstrap}), and hyperbolic three-manifold groups virtually embed into a RAAG. Indeed, closed hyperbolic 3-manifold groups (by works of Kahn-Markovic \cite{KM} and Bergeron-Wise \cite[Theorem 5.3]{BW}) are word hyperbolic groups that act properly and cocompactly on a CAT(0) cube complex and these groups virtually embed into a RAAG by work of Agol \cite{Agol} and Haglund-Wise \cite{HW}. 
Non uniform hyperbolic three manifold groups also virtually embed into RAAGs by a result of Wise \cite[Thm. 14.29]{WiseBook}. 

Because this argument in general invokes the result of Agol, which is ineffective, it is good that we have nice examples of finite covolume hyperbolic RACGS in which the source of ineffectiveness in Theorem \ref{thm:main} arises only from the proof of Theorem \ref{thm:strong_convergence_hyp-3-mfold} Part 1. We describe some of these examples in Section \ref{sec: example}.

The proof of Theorem \ref{thm:strong_convergence_hyp-3-mfold} for RAAGs or RACGs follows an iteration introduced by Gao et al. in \cite{gao2026newsourcepurelyfinite}. The key lemma of this iteration is stated in Lemma \ref{lem:Gao} below.  Reading carefully the iteration of \emph{(ibid.)}, it functions by reducing the number of edges in the defining graphs of RAAGs or RACGs and as such, the base case is the case of finitely generated free groups. Here we require a new ingredient.
\begin{theorem}
\label{thm:hyp-oct-reps-thm}
For any finitely generated free group $\mathbf{F}$ there exists a sequence of hyperoctahedral representations of $\mathbf{F}$ that strongly converge to the regular representation.
\end{theorem}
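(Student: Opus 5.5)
We plan to take uniformly random signed permutation matrices and prove that they converge strongly almost surely, by adapting the polynomial method of Chen--Garza-Vargas--Tropp--van Handel for random permutation matrices. Let $\mathbf{F}$ be free on $x_1,\dots,x_r$. For each $n$, choose independent uniformly random $\sigma_1,\dots,\sigma_r\in S_n$ and independent uniform signs $\epsilon_j(i)\in\{\pm1\}$. Define $\phi_n(x_j)=D_j P_{\sigma_j}$, where $D_j=\mathrm{diag}(\epsilon_j(1),\dots,\epsilon_j(n))$. Then $\phi_n\in\Hom(\mathbf{F},B_n)$. Unlike the permutation case, there is no trivial subrepresentation to remove. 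The target is that, almost surely, $\|\phi_n(z)\|\to\|\lambda(z)\|$ for every $z\in\C[\mathbf{F}]$. The lower bound $\liminf\|\phi_n(z)\|\ge\|\lambda(z)\|$ follows from weak convergence, i.e.\ $\frac1n\E\,\tr\,\phi_n(w)\to 0$ for $w\neq e$ together with concentration. So the real content is the upper bound.

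\textbf{Step 1 (trace formula).} For a reduced word $w$, write $\phi_n(w)=D_wP_{w(\sigma)}$. Then $\tr\,\phi_n(w)$ is a sum over fixed points $i$ of $w(\sigma)$ of the product of the signs met along the path of $i$ under $w$. Averaging over the signs first, the expected sign product is $1$ if every (generator, vertex) pair on the closed path is traversed an even number of times, and $0$ otherwise. Hence $\E\,\tr\,\phi_n(w)$ is a weighted count of closed trajectories, restricted to the ``even'' ones. Following the usual core-graph/quotient analysis of words in random permutations, we will show that for $|w|\le q$ this is a rational function of $n$ whose numerator and denominator have degree $O(q)$ (more precisely, $O(q\log q)$ suffices). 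Consequently $\E\,\tr\,h(\phi_n)$ is rational in $1/n$ with controlled degree for every noncommutative polynomial $h$ of degree $q$.

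\textbf{Step 2 (expansion and master inequality).} Expand $\frac1n\E\,\tr\,h(\phi_n)=\tau(h)+\frac1n\nu_1(h)+O(q^C/n^2)$, using the Markov-brothers-type inequality for rational functions. This is the key point, exactly as in the polynomial method.
\begin{itemize}
\item The zeroth-order term $\tau$ is the trace on $C^*_\lambda(\mathbf{F})$, since random signed permutations are asymptotically free Haar-like unitaries.
\item The first-order functional $\nu_1$ must be shown to be supported on $[-\|\lambda(z)\|,\|\lambda(z)\|]$ when applied to $h(z^*z)$-type test functions.
\end{itemize}
For permutations, $\nu_1(w)$ counts fixed points coming from non-primitive words, e.g.\ $w=u^d$ gives a divisor count. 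Here the extra sign average kills the contribution of every cycle whose sign product is not forced to be even. We expect $\nu_1(w)$ to be an explicit combination of the form $\sum_{d\mid k}(\pm)$ for $w=u^k$, with $u$ a non-power, so that the same argument as for permutations (by comparison with Bordenave--Collins) bounds the support of $\nu_1$. Given this, the master inequality yields $\E\,\tr\,\chi(\phi_n(z^*z))=o(1)$ for smooth $\chi$ supported outside $[0,\|\lambda(z)\|^2+\varepsilon]$. Gaussian or martingale concentration on $B_n$, applied to the norm, then upgrades this to almost sure convergence along the whole sequence. Matrix coefficients are handled by linearization, which reduces everything to $z\in M_k(\C)\otimes\C[\mathbf{F}]$.

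\textbf{Main obstacle.} We expect the hardest step to be identifying $\nu_1$ precisely and showing that it has no spectral mass beyond $\|\lambda(z)\|$. The degree bound in Step 1 should follow routinely from existing permutation arguments. If the direct computation of $\nu_1$ is messy, the first thing we would try is a reduction to known results. A signed permutation acting on the $2n$ vectors $\pm e_i$ gives a permutation representation of $B_n$ on $\C^{2n}$, which splits as the signed representation plus the standard permutation representation. This expresses $\nu_1^{\rm signed}=\nu_1^{(2n)}-\nu_1^{\rm perm}$. Here $\nu_1^{(2n)}$ is computed for random permutations of $2n$ points that commute with a fixed-point-free involution, i.e.\ random $2$-lifts of random $n$-covers. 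We would then use the known description of $\nu_1$ for random covers and random $2$-lifts to conclude.
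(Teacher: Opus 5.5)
Your outline matches the paper's: uniform random elements of $B_n$, and the Chen--Garza-Vargas--Tropp--van Handel polynomial method. Rationality of $\E\Tr w$ is inherited from the permutation case, because averaging the signs only restricts the sum to \emph{even} quotient graphs. Then $\nu_0=\tau$, and everything reduces to the support of $\nu_1$. The gap is that the step you call the main obstacle is exactly where the content lies, and you leave it as a guess of the wrong shape.

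The moment-growth argument for permutations (van Handel's survey, Lemma 3.10) needs two things. Writing $\nu_1(v^k)=\omega(k)$ for $v\in\F_r^{\mathrm{np}}$, it needs $\omega(1)=0$ and $|\omega(k)|$ polynomial in $k$. A nonzero $\omega(1)$ would add $\omega(1)\,\P[w_p\in\F_r^{\mathrm{np}}]\to\omega(1)$ to $\nu_1(x^p)$, where $w_p=g_{\mathbf I_1}\cdots g_{\mathbf I_p}$. That forces $|\nu_1(x^p)|^{1/p}\to 1$, which exceeds $\|P(\g,\g^*)\|$ whenever the latter is $<1$. A form $\sum_{d\mid k}(\pm)$ does not give $\omega(1)=0$.

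The paper imports the right statistics from Magee--Puder: $\E\tr w=n^{-1}d(k/2)+O(n^{-2})$ for $w=v^k$ with $k$ even, and $O(n^{-2})$ for $k$ odd. This yields $\nu_1(x^p)=\sum_{k\ge1}d(k)\sum_{v\in\F_r^{\mathrm{np}}}\P[w_p=v^{2k}]$, which has the same structure as in the permutation case.

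You can derive this from your own Step 1. Take $w=v^k$ cyclically reduced with $v$ a non-power. The order-one part of $\E\Tr w$ comes from the quotient maps of the cycle graph of $w$ onto the cycle graph of $v^j$, one for each $j\mid k$, each with weight $1+O(1/n)$; this is why the fixed-point count tends to $d(k)$. Such a map covers every edge exactly $k/j$ times, so the sign average keeps it if and only if $k/j$ is even. Hence $\omega(k)=\#\{j\mid k:\ k/j \text{ even}\}=d(k/2)\1_{2\mid k}\in[0,k]$. Every contribution is $0$ or $1$, never $-1$, and $\omega(1)=0$. Your $2n$-point decomposition gives the same answer: $\bigl(d(k)+d(k/2)\1_{2\mid k}\bigr)-d(k)$.

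There are two smaller issues. First, the random-walk reading of $\nu_1(x^p)$ used in the support argument requires $P$ to have nonnegative coefficients summing to $1$. The paper reaches this via the Magee--de la Salle positivization trick, which your plan omits. Linearization to $M_k(\C)\otimes\C[\F]$ goes in the opposite direction and is not needed for scalar $z$.

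Second, concentration of the operator norm on $B_n$ is not available. A single transposition or sign flip can move $\|\phi_n(z)\|$ by $O(1)$, so Azuma only controls fluctuations of order $\sqrt n$. It is also unnecessary: once $\nu_0(\chi)=\nu_1(\chi)=0$, the master inequality gives $\P\bigl[\|\phi_n(z)\|^2\ge\|\lambda(z)\|^2+\varepsilon\bigr]\le\E\Tr\,\chi\bigl(\phi_n(z^*z)\bigr)=O(1/n)$. That is convergence in probability, which already produces the required sequence.
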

When $\mathbf{F}=\Z$ this is easy (see Section \ref{subsec:Z}). When $\mathbf{F}$ is non-abelian, Theorem \ref{thm:hyp-oct-reps-thm} is established by showing that it holds for generic representations in the most natural random model (Theorem \ref{thm: free-sc} below). 
We do this using the polynomial method of Chen-Garza Vargas-Tropp-van Handel \cite{chennew} --- it is a simple modification of \emph{(ibid.)} especially considering that the needed trace statistics were already obtained in a work of the first named author and Puder \cite{Ma.Pu2021}.
As a minor remark, we knew this was possible prior to the existence of the polynomial method, using the method of Bordenave and Collins from \cite{BordenaveCollins}, but the polynomial method produces a more concise proof.

\subsection*{Acknowledgements}

M.M. This project has received
funding from the European Research Council (ERC) under the European
Union's Horizon 2020 research and innovation programme
(grant agreements No 949143 and No 101229716). This work
was funded in part by a Philip Leverhulme prize awarded by the Leverhulme Trust.
J.T. has received funding from the Leverhulme Trust through a Leverhulme Early
Career Fellowship (Grant No. ECF-2024-440). We thank Bram Petri and Ramon van Handel for useful discussions.

\section{Bootstrap from free groups to RACGs or RAAGs}
\label{sec: bootstrap}
Let $\Gamma$  denote the defining simple graph of a RACG or RAAG which will be written $G\Gamma$. 
We recall the argument of Gao et al. from \cite{gao2026newsourcepurelyfinite} that allows us to pass to the case of free groups.
Pick any  vertex $v\in \Gamma$. Let $N(v)$  denote the neighbors of $v$ in $\Gamma$ with the induced subgraph structure. Let $\Gamma'$ be the induced subgraph of $\Gamma$ on the vertices other than $v$. 
Then $G\Gamma$ splits as the amalgamated product 
\[
G\Gamma =   G\Gamma' \ast_{GN(v)} ( GN(v) \times \Z ).
\]
By a result of Green \cite[Thm. 5.3]{Green} there is a sequence $\{H_i\}^\infty_{i=1}$  of finite index subgroups of $G\Gamma'$ such that 
\[ \bigcap_{i=1}^\infty H_i = GN(v).\]
Let $K_i$ denote the largest normal subgroup of $G\Gamma'$ contained in $H_i$; each of these are finite index subgroups of $G\Gamma'$.

The following is the new contribution of Gao et al. \cite[Lemma 2.3]{gao2026newsourcepurelyfinite}.
\begin{lemma}\label{lem:Gao}
There are a sequence of group homomorphisms
\[
G\Gamma \to G\Gamma' \times \left( G\Gamma' /K_i \ast_{H_i/K_i}  ( H_i/K_i \times \Z ) \right).
\]
such that the pull backs of the regular representations of the right hand side strongly converges as $i\to\infty$ to the regular representation of $G\Gamma$.
\end{lemma}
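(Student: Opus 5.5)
The plan is to write down the homomorphisms explicitly and then prove strong convergence by a standard two-part scheme: weak containment gives the lower bound, and the upper bound comes from a limiting argument along the residual chain $H_i$.

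\textbf{The maps.} Write $A=G\Gamma'$, $C=GN(v)$, $B=C\times\Z$, so that $G\Gamma=A\ast_C B$.
\begin{itemize}
\item The first coordinate is the retraction $r\colon G\Gamma\to A$. It is the identity on $A$, is the identity on $C$, and kills the $\Z$ factor. This is well defined since it agrees on $C$.
\item The second coordinate is $q_i\colon A\ast_C B\to A/K_i\ast_{H_i/K_i}(H_i/K_i\times\Z)$. On $A$ it is the quotient map $A\to A/K_i$. On $B=C\times\Z$ it sends $C$ into $H_i/K_i$ via $C\le H_i$ and is the identity on $\Z$.
\item Compatibility on $C$ is clear, so $\pi_i=(r,q_i)$ is a homomorphism.
\end{itemize}
Denote by $\rho_i=\lambda\circ\pi_i$ the pullback of the regular representation of the target.

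\textbf{Lower bound.} I would first check that $\pi_i$ becomes injective on any fixed finite set as $i\to\infty$. Take a reduced word $g=a_1b_1a_2b_2\cdots$ in $A\ast_C B$, with $a_j\in A\setminus C$ and $b_j\in B\setminus C$. Because $\bigcap_i H_i=C$, for $i$ large every $a_j\notin H_i$. Hence $q_i(g)$ is still reduced in the target amalgam. The only exception is $g\in C$, and there $r$ is injective. So for every $g\ne e$ in a given finite set, $\pi_i(g)\ne e$ once $i$ is large.

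It follows that $\rho_i$ converges to $\lambda_{G\Gamma}$ in the sense of traces (the delta function at $e$ pulls back correctly). This gives $\liminf_i\|\rho_i(z)\|\ge\|\lambda(z)\|$ by the usual lower semicontinuity argument for regular representations.

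\textbf{Upper bound.} This is the main obstacle. I need $\limsup_i\|\rho_i(z)\|\le\|\lambda(z)\|$, i.e.\ $\rho_i$ must be weakly contained, asymptotically, in $\lambda_{G\Gamma}$. I would proceed in three steps.
\begin{enumerate}
\item The regular representation of a product is the tensor product of the regular representations, so $\rho_i=\lambda_A\circ r\otimes\lambda_{Q_i}\circ q_i$, where $Q_i$ is the amalgam target.
\item Fell absorption: $\lambda_A\otimes\sigma$ is equivalent to a multiple of $\lambda_A$ when restricted to $A$. The whole point of the $A$ factor is that it absorbs the non-faithful finite quotient $A/K_i$ on the $A$-side.
\item On the $B$-side, $\lambda_{Q_i}$ restricted to the image of $B$ is a multiple of the regular representation of $H_i/K_i\times\Z$.
\end{enumerate}
I would then realize $\rho_i$ concretely on a Bass--Serre tree model and compare it with $\lambda_{G\Gamma}$ through an induction $\mathrm{Ind}_C^{A}$ versus $\mathrm{Ind}_{H_i}^{A}$ structure. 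In the amalgam $Q_i$, the relevant induced piece is $\ell^2(A/H_i)$, twisted by the $\lambda_A$ factor. After tensoring with $\lambda_A$, this should become weakly equivalent to $\lambda_A$ restricted appropriately.

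The concrete target is the following claim: $\rho_i$ is unitarily equivalent to a representation of $A\ast_C B$ built from the pair $(\lambda_A\otimes\ell^2(A/K_i),\ \lambda_B\otimes\cdots)$ glued along $C$. Via a free-product-with-amalgamation norm formula (the amalgamated Haagerup-type or reduced amalgamated free product), its norm is controlled by the reduced amalgamated free product $C^*$-algebra of $A$ and $B$ over $C$. That algebra is $C^*_\lambda(G\Gamma)$. I expect the delicate point to be handling the conditional expectations onto $C^*_\lambda(C)$ versus $C^*_\lambda(H_i)$, which converge only in the limit $i\to\infty$. For that I would use $\bigcap_i H_i=C$ to show that the expectations converge pointwise on words of bounded length, and combine this with the lower bound to conclude.
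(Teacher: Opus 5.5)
Your maps $\pi_i=(r,q_i)$ and your lower bound are fine. To justify ``for $i$ large every $a_j\notin H_i$'' you should first replace $H_i$ by $H_1\cap\cdots\cap H_i$, since $\bigcap_i H_i=C$ alone does not give this for non-nested $H_i$. The paper does not prove this lemma; it imports it from \cite[Lemma 2.3]{gao2026newsourcepurelyfinite}. Its whole content is the upper bound, and that is where your proposal has a genuine gap.

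Your ``concrete target'' says $\rho_i$ is controlled by the reduced amalgamated free product of $A$ and $B$ over $C$, i.e.\ by $C^*_\lambda(G\Gamma)$. This is false for every fixed $i$. Let $t$ generate the $\Z$ factor. The map $A\ast_{H_i}(H_i\times\Z)\to A\times Q_i$, $a\mapsto(a,aK_i)$, $t\mapsto(e,t)$, is injective. Hence $\pi_i(G\Gamma)\cong A\ast_{H_i}(H_i\times\Z)$: the amalgamation is over $H_i$, not $C$. Consequently $\rho_i$ is a multiple of the quasi-regular representation on $\ell^2(G\Gamma/\ker\pi_i)$.

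For $h\in H_i\setminus C$, the commutator $[t,h]=t\,(hth^{-1})^{-1}$ is a nontrivial reduced word lying in $\ker\pi_i$. Recall that $\ker r$ is free on the conjugates $ata^{-1}$, $aC\in A/C$. So if you choose $h_1,h_2\in H_i$ with $C,h_1C,h_2C$ distinct, then $x=[t,h_1]$ and $y=[t,h_2]$ freely generate a copy of $\F_2$. The element $z=\tfrac14(x+x^{-1}+y+y^{-1})$ then satisfies $\rho_i(z)=\mathrm{Id}$, whereas $\|\lambda(z)\|=\tfrac{\sqrt3}{2}$. So $\rho_i\not\prec\lambda_{G\Gamma}$ for any $i$. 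No argument that compares $\rho_i$ with the amalgam over $C$ at a fixed stage can work.

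Your proposed remedy is to show $E_{H_i}\to E_C$ pointwise on words of bounded length. This only gives convergence of the moments $\langle\rho_i((z^*z)^p)\xi,\xi\rangle\to\tau((z^*z)^p)$ for each fixed $p$, which is exactly the input to your lower bound. The norm is $\lim_p\tau((z^*z)^p)^{1/2p}$. The reduced forms of words in $(z^*z)^p$ have $A$-syllables of length up to order $p$, and these do land in $H_i\setminus C$ once $p\gg i$. So the limits $p\to\infty$ and $i\to\infty$ cannot be interchanged without a further idea. In ultraproduct language: the limiting conditional expectation on the algebra generated by the $\rho_i(G\Gamma)$ has the correct amalgamated-free distribution, but its faithfulness is not addressed, and that faithfulness is precisely the statement of the lemma.

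What is missing is a norm estimate uniform in $i$. One way to supply it is a Haagerup-type inequality for reduced amalgamated free products in the style of Ricard--Xu, whose constants are polynomial in the syllable length and independent of the amalgamated subalgebra.
\begin{enumerate}
\item Apply it to $(z^*z)^p$ in $C^*_\lambda(A\ast_{H_i}(H_i\times\Z))$.
\item For fixed $p$ and $i$ large, the finitely many reduced forms and operator-valued $L^2$-type norms entering the inequality coincide with those computed in $G\Gamma$.
\item In $G\Gamma$ these quantities are at most $\mathrm{poly}(p)\,\|\lambda(z)\|^{2p}$, using the polynomially bounded projections onto fixed syllable length.
\item The polynomial factors disappear after taking $2p$-th roots.
\end{enumerate}
The cited proof is described as inspired by Ozawa's work. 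In any case, some such quantitative input has to be added; your proposal does not contain it.
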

The proof of \cite[Lemma 2.3]{gao2026newsourcepurelyfinite} was `inspired by the work of Ozawa' \cite{ozawa2026proximalityselflessnessgroupcalgebras}, and the idea of using this type of intermediate lemma for strong convergence ($G\Gamma$ is an generalized extension of centralizers of $G\Gamma'$) goes back to \cite{Lo.Ma2025}.

 We aim to prove $G\Gamma$ is $\PHF$, the induction works as follows\footnote{We require to repeat the induction of \cite{gao2026newsourcepurelyfinite} to make it precise, so the reader can understand what we have to add.}.

The following lemma is \cite[Lemma 7.1]{Lo.Ma2025} plus the easy-to-check fact that inducing a hyperoctahedral representation from a finite index subgroup gives a hyperoctahedral representation.
\begin{lemma}\label{lem:finite_index}If $G_1 \leq G_2$ is a finite index inclusion of discrete groups, and $G_1$ is $\PHF$, then $G_2$ is $\PHF$.
\end{lemma}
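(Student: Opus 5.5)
We have to make sense of $\PHF$ first; the natural reading is that $G$ has a sequence of hyperoctahedral representations $\phi_i: G\to B_{n_i}$ that strongly converge to the regular representation $\lambda_G$. The plan is to induce such a sequence from $G_1$ to $G_2$. We then check two things. First, the induced representations are again hyperoctahedral. Second, they strongly converge to $\lambda_{G_2}$.

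\emph{Step 1 (hyperoctahedral).} Fix coset representatives $g_1,\dots,g_m$ of $G_2/G_1$, where $m=[G_2:G_1]$. For $g\in G_2$ and each $j$, write $g g_j = g_{\sigma(j)} h_j(g)$ with $h_j(g)\in G_1$. Then $\mathrm{Ind}_{G_1}^{G_2}\phi_i(g)$ is the $mn_i\times mn_i$ block matrix whose $(\sigma(j),j)$ block is $\phi_i(h_j(g))$ and whose other blocks are zero. Each nonzero block is a signed permutation matrix and the blocks are arranged in a permutation pattern, so the whole matrix lies in $B_{mn_i}$.

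\emph{Step 2 (strong convergence).} This is \cite[Lemma 7.1]{Lo.Ma2025}, which we may cite. To recall the idea: take $z\in\C[G_2]$ and form $\mathrm{Ind}\,\phi_i(z)$. It is an $m\times m$ matrix whose entries are $\phi_i(w_{jk})$ for fixed elements $w_{jk}\in\C[G_1]$ that depend only on $z$ and the chosen coset representatives. So it is the image of a fixed element $W\in M_m(\C[G_1])$ under $\mathrm{id}\otimes\phi_i$. The induced regular representation satisfies $\mathrm{Ind}\,\lambda_{G_1}\cong\lambda_{G_2}$, so $\lambda_{G_2}(z)$ is $(\mathrm{id}\otimes\lambda_{G_1})(W)$.

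The main point is that strong convergence for $\C[G_1]$ upgrades to matrix coefficients in $M_m(\C[G_1])$. The first approach is the standard one: strong convergence of $\phi_i$ to $\lambda_{G_1}$ means $\limsup_i\|\phi_i(x)\|\le\|\lambda_{G_1}(x)\|$ for all $x$. Hence $x\mapsto \lim_\omega\phi_i(x)$ along an ultrafilter factors through $C^*_\lambda(G_1)$ as an injective, and therefore isometric, $*$-homomorphism $C^*_\lambda(G_1)\to\prod_\omega M_{n_i}$. Tensoring a $*$-homomorphism with $M_m$ preserves isometry, so $\|(\mathrm{id}\otimes\phi_i)(W)\|\to\|(\mathrm{id}\otimes\lambda_{G_1})(W)\|$. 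Injectivity uses the lower bound $\liminf\|\phi_i(x)\|\ge\|\lambda(x)\|$, which is part of the hypothesis.

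Combining the two steps, $\mathrm{Ind}\,\phi_i$ is a hyperoctahedral sequence that strongly converges to $\lambda_{G_2}$, so $G_2$ is $\PHF$.
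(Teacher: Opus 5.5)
Your proposal is correct and follows the paper's route: the paper proves this lemma by citing \cite[Lemma 7.1]{Lo.Ma2025} for preservation of strong convergence under induction from a finite index subgroup, plus the fact (stated there without proof) that inducing a hyperoctahedral representation gives a hyperoctahedral one, which your block-matrix computation in Step 1 verifies. Your ultraproduct sketch of the matrix-amplification in Step 2 is also sound, since the two norm bounds on $\C[G_1]$ make the map isometric on a dense subalgebra and hence on all of $C^*_\lambda(G_1)$.
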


The base case of the induction is that the graph $\Gamma$  has no edges. If this is the case, we appeal to our Theorem \ref{thm: free-sc} to obtain that $G\Gamma$ is $\PHF$ --- this is immediate for RAAGs but note also that free Coxeter groups have free groups as finite index subgroups and hence by Lemma \ref{lem:finite_index} are also $\PHF$.

Otherwise, we appeal to induction on the number of edges. Make sure the vertex we choose to delete from $\Gamma$ to form $\Gamma'$ has an incident edge, then there are less edges in $\Gamma'$.

Here is the inductive step.
First, the group $G\Gamma' /K_i \ast_{H_i/K_i}  ( H_i/K_i \times \Z )$ is virtually free by a result of Gregorac \cite{Gregorac}, and so is $\PHF$ by Lemma \ref{lem:finite_index}.

We assume $G\Gamma'$ is $\PHF$ by the inductive hypothesis. Now, $G\Gamma'$  is exact (e.g. by Dykema \cite{Dykema}), and using e.g. Haagerup-Thorbj{\o}rnsen \cite[Thm. 9.1]{HT}, plus the fact that tensor products of hyperoctahedral representations are hyperoctahedral, we obtain that 
\[
 G\Gamma' \times \left( G\Gamma' /K_i \ast_{H_i/K_i}  ( H_i/K_i \times \Z ) \right)
\]
is $\PHF$. Now composing the homomorphisms from Lemma \ref{lem:Gao} with the homomorphisms from the above group to hyperoctahedral groups in a suitable way, we obtain that $G\Gamma$  is $\PHF$.

\section{Proof of Theorem \ref{thm:hyp-oct-reps-thm}}

\subsection{Non-abelian free groups}

Let $\F_r$ be the free group on $r$ generators with fixed generating set $\{g_1,\ldots,g_r\}$ and write $g_i=g^{-1}_{i-r}$ for $i=r+1,\ldots,2r$ and $\g=(g_1,\ldots, g_r)$. We denote by $\lambda_{\F_r}$ the regular representation of $\F_r$ which acts on $\ell^2(\F_r)$ by 
    $$\left(\lambda_{\F_r}(g)f\right)(h)=f(g^{-1}h).$$

Recall from the introduction that $B_n$ is the group of signed $n\times n$ permutation matrices. We define random hyperoctahedral representations $\rho_n$ of $\F_r$ by 
$$g_i\mapsto R_i^n,$$
where $R_i^n$ are i.i.d. uniformly random signed permutation matrices in $B_n$. 
In this section we prove the following.

\begin{theorem}
\label{thm: free-sc}
    For any $z\in \mathbb{C}[\F_r]$, 
        $$\|\rho_n(z)\|\to \|\lambda_{\F_r}(z)\|,$$
    in probability as $n\to\infty$, where on both sides the norm is the operator norm. In words, the representations $\rho_n$ strongly converge (in probability) to the regular representation $\lambda_{\F_r}$.
\end{theorem}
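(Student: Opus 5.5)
We follow the polynomial method of Chen--Garza Vargas--Tropp--van Handel \cite{chennew}. We linearize first: by the standard linearization trick, it suffices to prove convergence of norms for self-adjoint $z$ with matrix coefficients, $z=\sum_{g} a_g\otimes g$ supported on words of bounded length. The lower bound $\liminf\|\rho_n(z)\|\geq\|\lambda_{\F_r}(z)\|$ in probability is the easy half. It follows from weak convergence in expectation of normalized traces,
\[
\E\,\tfrac1n\tr\rho_n(w)\to \1_{w=e},
\]
combined with concentration, or simply from the fact that $\F_r$ is $C^*$-simple/exact, which gives lower semicontinuity. So the work is the upper bound.

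For the upper bound, the key input is the structure of trace statistics. For $w\in\F_r$ a non-identity word of length $\le q$, we need the \emph{rational} expansion
\[
\E\,\tr\rho_n(w)=\sum_{k\geq 0}\frac{c_k(w)}{n^{k-1}},
\]
valid for $n\geq q$, and more precisely that $\E\tr\rho_n(w)$ equals $\Phi_w(1/n)$ with $\Phi_w$ a rational function of degree $O(q)$ whose poles lie outside $[0,1/q]$. This is the content of the signed-permutation computations of Magee--Puder \cite{Ma.Pu2021}. There the expected trace of a word in random elements of $B_n$ is expressed via the expected number of fixed points of the word and of its ``signed'' lift, which is a word in permutations of $2n$ points, i.e.\ the $S_{2n}$-covering of the $S_n$-action. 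Concretely, $\tr\rho_n(w)=\mathrm{fix}_{2n}(w)-\mathrm{fix}_n(w)$, using the $2n$-point action of $B_n\le S_{2n}$. Each of these is a rational function in $n$ of controlled degree, by the Nica/Linial--Puder core-graph expansion adapted to $B_n$. We then extend the expansion to polynomial test functions $h(\rho_n(z))$ with $\deg h\le q$, by expanding into words.

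Next we apply the polynomial method. Set $\nu_n(h)=\E\,\tfrac{1}{Dn}\tr h(\rho_n(z))$, so that $\nu_n(h)=\nu_\infty(h)+\tfrac1n\nu'(h)+O(q^{C}/n^2)$. By the Markov brothers' inequality argument of \cite{chennew}, using the degree bound $O(q)$ on the rational function in $1/n$ and the fact that $\nu_n$ are genuine probability measures for all integers $n\ge q$, we deduce:
\begin{itemize}
\item $|\nu'(h)|\le C q^{C}\|h\|_{C^0[-K,K]}$, so $\nu'$ extends to a compactly supported distribution;
\item the remainder is controlled by $q^{C}/n^2$.
\end{itemize}
The crucial step is showing $\mathrm{supp}\,\nu'\subseteq \mathrm{spec}\,\lambda_{\F_r}(z)$. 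As in \cite{chennew}, this follows by showing that $\nu'(h)$ for $h=x^{2p}$ grows at most like $p^{C}\|\lambda(z)\|^{2p}$. That bound comes from $\nu'$ counting, with the $1/n$-correction, words weighted by the order-one coefficient in the expansion. For $B_n$ these coefficients are supported on words that are proper powers, or more generally on words whose core graph has Euler characteristic $0$, with at most a sign twist. A combinatorial bound on such words, as in the permutation case of \cite{chennew}, then gives the required growth.

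Finally, we take $h$ a smooth function vanishing on a neighborhood of $[-\|\lambda(z)\|,\|\lambda(z)\|]$ with $h\geq 0$, equal to $1$ beyond $\|\lambda(z)\|+\varepsilon$. Then $\nu_\infty(h)=\nu'(h)=0$, so $\E\tr h(\rho_n(z))=O(\mathrm{polylog}/n)\to0$, and Markov's inequality gives the upper bound in probability. One subtlety needs care: the trivial-like subrepresentations. Unlike $S_n$, the standard representation of $B_n$ on $\C^n$ has no invariant vector, so no restriction to the orthogonal complement of constants is needed. This is why the hyperoctahedral model is convenient.

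The main obstacle is the previous step: establishing that $\E\tr\rho_n(w)$ is a rational function of $1/n$ of degree $O(|w|)$ with well-separated poles, uniformly in $w$, and identifying the $1/n$-coefficient sharply enough to bound $\nu'(x^{2p})$. We would extract this from \cite{Ma.Pu2021} via the fixed-point decomposition above, reducing to the known $S_n$ and $S_{2n}$ expansions.
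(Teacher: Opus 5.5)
\textbf{Gap: the support of $\nu'$ for matrix coefficients.} The problem is in the step you call crucial, and your opening reduction makes it harder, not easier. After you reduce to $z=\sum_g a_g\otimes g$ with matrix coefficients, the first-order term from \cite{Ma.Pu2021} is $d(k/2)$ for $w=v^k$ with $k$ even and $0$ otherwise. This gives
\[
\nu'(x^{p})=\sum_{j\geq 1}\sum_{u\neq e}\tr_D\bigl(A^{(p)}_{u^{2j}}\bigr),\qquad z^{p}=\sum_{w\in\F_r}A^{(p)}_{w}\otimes w,
\]
which is a sum of matrix coefficients of $z^p$ over exponentially many words, with cancellations. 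A combinatorial bound cannot give growth $\|\lambda_{\F_r}(z)\|^{p}$:
\begin{itemize}
\item Bounding each $A^{(p)}_w$ by $\|\lambda_{\F_r}(z)\|^{p}$ and counting the squares of length at most $pq$ loses an exponential factor in $p$.
\item Replacing $a_g$ by $\|a_g\|$ gives the rate $\|\lambda_{\F_r}(\sum_g\|a_g\|g)\|$, which is in general strictly larger than $\|\lambda_{\F_r}(z)\|$.
\end{itemize}
The argument that does give the sharp rate is the one behind \cite[Lemma 3.10]{vanhandel2026strongconvergencephenomenon}, which the paper uses. It dominates the indicator that a path ends at $u^2$ by a sum over times $s$ at which the path passes within distance $q$ of the midpoint of the geodesic from $e$ to $u^2$. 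Cauchy--Schwarz then bounds each piece by $\P[X_{2s}=e]^{1/2}\P[X_{2(p-s)}=e]^{1/2}\le\|\lambda_{\F_r}(P)\|^{p}$. That domination over $s$ is valid only for nonnegative weights. You give no mechanism that controls the cancellations in the matrix case, so ``a combinatorial bound on such words'' does not deliver $\mathrm{supp}\,\nu'\subseteq[-\|\lambda(z)\|,\|\lambda(z)\|]$.

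\textbf{How the paper avoids this.} Instead of linearization, the paper uses the positivization trick \cite[Prop.~6.3]{MageeSalle}: it suffices to prove the upper bound for scalar $P$ with nonnegative coefficients summing to $1$. Proposition \ref{prop: nu_1} then writes $\nu_1(x^p)=\sum_k d(k)\sum_{v\in\F_r^{\mathrm{np}}}\P[X_p=v^{2k}]$, where $X_p$ is the random walk on $\F_r$ whose step law is given by the coefficients of $P$. The random-walk argument above then yields $\limsup_p\nu_1(x^p)^{1/p}\le\|P(\g,\g^*)\|$. With that substitution, the rest of your outline matches the paper: rationality in $1/n$, the Markov-inequality machinery of \cite{chennew}, the lower bound from weak convergence, and no invariant vector to remove.

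\textbf{A smaller error.} A uniform element of $B_n\le S_{2n}$ is not uniform in $S_{2n}$. So the $S_{2n}$ core-graph expansion does not compute $\E\,\mathrm{fix}_{2n}(w)$; for $w=g_1^2$ one gets $3$ rather than $2$. The rationality has to come from the $B_n$-specific graph count \cite[Lemma 3.1]{Ma.Pu2021}, which is what the paper quotes in Theorem \ref{thm: magee-puder}.
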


To any given $z\in \C[\F_r]$ we may associate a non-commutative polynomial $P\in \C\langle \mathbf{x},\mathbf{x}^*\rangle$ in the variables $\mathbf{x}=(x_1,\ldots,x_r)$ and their adjoints, such that
    $$z = P(g_1,\ldots, g_{2r}).$$
If we let $\g = (\lambda_{\F_r}(g_1),\ldots,\lambda_{\F_r}(g_r))$ and $\rn=(R_1^n,\ldots,R_r^n)=(\rho_n(g_1),\ldots,\rho_n(g_r))$ then the statement of Theorem \ref{thm: free-sc} can be restated as: for any non-commutative polynomial $P\in \C\langle \mathbf{x},\mathbf{x}^*\rangle$,
    $$\|P(\rn,\rn^*)\|\to\|P(\g,\g^*)\|,$$
in probability, where on both sides we use the operator norm on $U(n)$ and $C^*_{\mathrm{red}}(\F_r)$ respectively.

The difficulty in proving this statement is in showing that 
    \begin{align}
    \label{eq: sc-upper-bd}
    \|P(\rn,\rn^*)\|\leq\|P(\g,\g^*)\|+o_n(1),
    \end{align}
in probability. The corresponding lower bound follows from \cite[Lemma 5.14]{MageeSalle}, given that $C^*_{\mathrm{red}}(\F_r)$ has unique trace and is exact.

We can further reduce this to checking only polynomials whose coefficients are non-negative and sum to 1. This is due to a positivization trick introduced by the first named author (M.M.) and de la Salle in \cite[Proposition 6.3]{MageeSalle}, see also \cite[Lemma 2.26]{vanhandel2026strongconvergencephenomenon}. 

To this end, we directly employ the polynomial method due to Chen, Garza-Vargas, Tropp and van Handel \cite{chennew} to prove the following proposition from which the upper bound \eqref{eq: sc-upper-bd} follows identically as in \cite[Proof of Thm. 3.9]{chennew}.

We use the following notation: $\tr=\frac{1}{N}\Tr_N$ denotes the normalised trace and $\|f\|_{C^m[a,b]}=\sum_{j=0}^m \sup_{x\in [a,b]}|f^{(j)}(x)|$ is the usual $C^m$ norm restricted to an interval $[a,b]$.

\begin{proposition}
\label{prop:master_ineq}
    Fix a non-commutative polynomial $P\in  \R_{\geq 0}\langle\mathbf{x},\mathbf{x}^*\rangle$ of degree $q_0$. There exist compactly supported distributions $\nu_i$ on $C^\infty(\R)$ for every $i\in\N\cup\{0\}$ such that for every $m\geq 1$,
        \begin{align*}
            \left|\E\ \tr\left(h\circ P\left(\rn,\rn^*\right)\right)-\sum_{i=0}^{m-1}\frac{\nu_i(h)}{n^i}\right| \leq \frac{(4q_0r)^{4m}}{n^m}\left\|h\left(K\cos(\theta)\right)\right\|_{C^{4m+1}([0,2\pi])},
        \end{align*}
    where $K=\left\|P(\g,\g^*)\right\|_{C^*(\F_r)}$. Moreover, $\nu_0$ and $\nu_1$ are supported on
    $$[-\|P(\g,\g^*)\|_{C^*_{\mathrm{red}}(\F_r)},\|P(\g,\g^*)\|_{C^*_{\mathrm{red}}(\F_r)}].$$
\end{proposition}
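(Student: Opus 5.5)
We follow the polynomial method of \cite{chennew} for random permutation matrices, replacing their trace statistics by those of random signed permutations from \cite{Ma.Pu2021}. The plan has four steps.

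\textbf{Step 1: rationality of word traces.} Since $P$ has nonnegative coefficients, $P(\rn,\rn^*)$ is a linear combination of images of words. For $h$ a polynomial of degree $q$, $\E\,\tr\, h(P(\rn,\rn^*))$ is a combination of $\E\,\tr\,\rho_n(w)$ over words $w$ of length at most $qq_0$. For a signed permutation $R=\epsilon\sigma$, the trace $\Tr \rho_n(w)$ is the signed count of fixed points of $\sigma_w$, where each fixed point is weighted by the product of signs along its path. Averaging over the signs kills every fixed point whose cycle structure in the Stallings core does not lift to the trivial $\Z/2$-cover. The Magee--Puder analysis of $\E\,\Tr_{B_n}$ \cite{Ma.Pu2021}, or equivalently a direct Nica-type computation in which one counts $B_n$-labelings of quotients of the core graph of $w$, shows that for $n$ large
\[
\E\,\tr\,\rho_n(w)=\frac{f_w(1/n)}{g_w(1/n)},
\]
with numerator and denominator polynomials of degree $\le c|w|$ and $g_w(0)=1$. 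We would also record the denominator as $\prod_{j\le |w|}(1-j/n)^{r}$, exactly as for $S_n$. Consequently $\Phi_h(1/n):=\E\,\tr\,h(P)$ is rational in $1/n$, of degree $O(q q_0 r)$, with no poles in $[0,1/(qq_0)]$ or so.

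\textbf{Step 2: Markov brothers' inequality.} We apply the Chen--Garza-Vargas--Tropp--van Handel machinery verbatim. Since $\|P(\rn,\rn^*)\|\le K$ and $|\Phi_h|\le\|h\|_{\infty,[-K,K]}$ on the grid $1/n$, the Markov inequality for rational functions controls the derivatives $\Phi_h^{(m)}$ on $[0,1/(qq_0)]$ by $(Cqq_0r)^{2m}\|h\|_\infty$. Defining $\nu_i(h)=\Phi_h^{(i)}(0)/i!$ and applying Taylor's theorem gives the stated bound for polynomial $h$ with $q^{2m}$-type growth. The Chebyshev expansion $h(K\cos\theta)=\sum_q a_q T_q$ then converts $\sum_q |a_q| q^{4m}$ into $\|h(K\cos\theta)\|_{C^{4m+1}}$. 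This extends $\nu_i$ to compactly supported distributions and yields the inequality, with constant $(4q_0r)^{4m}$ after bookkeeping identical to \cite{chennew}.

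\textbf{Step 3: the $\nu_0$ term.} Here $\nu_0(h)=\lim\E\,\tr\, h(P)$. By asymptotic freeness, meaning that the $n^0$ term of $\E\,\tr\,\rho_n(w)$ is $\1_{w=e}$, this limit equals $\tau(h(P(\g,\g^*)))$. So $\nu_0$ is the spectral measure of $P(\g,\g^*)$ under the regular trace, which is supported on $[-\|P\|_{\mathrm{red}},\|P\|_{\mathrm{red}}]$.

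\textbf{Step 4: support of $\nu_1$ (the main obstacle).} In the permutation case this is where \cite{chennew} use that the $1/n$ coefficient of $\E\,\tr\,w$ is the number of divisors (roots) of a non-power word, giving growth $\exp(o(q))$ on moments. We would use \cite{Ma.Pu2021}, which says that the $1/n$ coefficient for $B_n$ equals the number of $\Z/2$-sign-compatible ``roots'': for $w=u^d$ with $u$ a non-power, the coefficient is a signed count over $d'\mid d$ of bounded size, at most $d(d)$ in absolute value. Then $|\nu_1(x^{p})|\le \sum_{|w|\le pq_0} c_w\, d(w)$, with $c_w$ the coefficient of $w$ in $P^p$. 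Following \cite[Lemma 4.x]{chennew}, we bound this by $\mathrm{poly}(p)\,\|P\|_{\mathrm{red}}^{p}$. The argument groups the words $u^d$ and uses that $\sum_{u} c_{u^d}$ is controlled by $\tau$ of powers of $P$, via the fact that proper powers are detected in $\lambda_{\F_r}$ with norm $\|P\|_{\mathrm{red}}$. Subexponential growth of the moments then forces $\operatorname{supp}\nu_1\subset[-\|P\|_{\mathrm{red}},\|P\|_{\mathrm{red}}]$. The sign averaging only cancels terms, so the bound is no worse than in the permutation case; the care lies in checking that the Magee--Puder leading-order formula has this divisor form.
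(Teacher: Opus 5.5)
Your Steps 1--3 follow the paper's route: rationality of $\E\,\tr\, w(\rn,\rn^*)$ from \cite{Ma.Pu2021}, the machinery of \cite[Sections 6--7]{chennew} for the expansion and its extension to smooth $h$, and $\nu_0(h)=\tau(h\circ P)$. Step 4, however, has a concrete gap. You record only that the $1/n$ coefficient of $\E\,\tr\,w(\rn,\rn^*)$ for $w=u^d$ is at most $d(d)$ in absolute value, and then bound $|\nu_1(x^p)|\le\sum_{w} c_w\, d(w)$. Since $d(1)=1$, this gives weight one to every non-power word $w\neq e$. After positivization the coefficients of $P^p$ are nonnegative and sum to $1$, so your right-hand side is at least $1-\tau(P(\g,\g^*)^p)\to 1$. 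It therefore cannot be $\mathrm{poly}(p)\,\|P(\g,\g^*)\|_{C^*_{\mathrm{red}}(\F_r)}^p$ whenever that norm is $<1$, which is the typical case. Your remark that sign averaging makes things ``no worse than the permutation case'' has the same flaw. It compares against the full permutation representation, whose $1/n$ coefficient is $d(d)\geq 1$ for every $w\neq e$. There the trivial summand places part of $\nu_1$ at $K$, which is exactly why \cite{chennew} pass to the $(n-1)$-dimensional representation, where the coefficient becomes $d(d)-1$.

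The missing ingredient is the precise form of \cite[Theorem 1.11]{Ma.Pu2021}, stated in the paper as Theorem \ref{thm: magee-puder}. The $1/n$ coefficient is not a signed count: it equals $d(k/2)$ when $w=v^k$ with $v$ a non-power and $k$ even, and it is \emph{zero} when $k$ is odd, in particular for all non-powers. This is what lets the natural representation of $B_n$ behave like the nontrivial part of the permutation representation without discarding anything. It yields the formula of Proposition \ref{prop: nu_1}, in which $\nu_1(x^p)$ is a sum over even powers $v^{2k}$ only, weighted by $d(k)\le d(2k)-1$. Only once the sum is restricted to proper powers does the argument of \cite[Proof of Lemma 3.10]{vanhandel2026strongconvergencephenomenon} give $\limsup_{p}|\nu_1(x^p)|^{1/p}\le\|P(\g,\g^*)\|_{C^*_{\mathrm{red}}(\F_r)}$. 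Replace ``at most $d(d)$'' by the exact even/odd statement, or at least by the bound $d(d)-1$, which vanishes for $d=1$; with that change your plan matches the paper's proof.
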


We now proceed with the proof of Proposition \ref{prop:master_ineq}. The starting point for the polynomial method is to obtain an expansion in the case when $h$ is a polynomial. In this case, $h\circ P$ is a polynomial of degree $\deg(h) \deg(P)$ and each monomial term is a word in the free group evaluated at the $R_i^n$. The expected trace of such a word has been studied by the first named author (M.M.) and Puder in \cite{Ma.Pu2021}. 

We introduce the following notation: if $w$ is a word in the generating set of the free group, then we write $w(\rn,\rn^*)$ to be the word in the $R_i^n$ and their adjoints obtained under the map $\rho_n$. Further, let $\F_r^{\mathrm{np}}$ denote the collection of words in the free group that are not proper powers. Recall that any word $w$ can be written as $v^k$ for some $v\in \F_r^\mathrm{np}$ and $k\in\N$.

\begin{theorem}[{\cite[Lemma 1.9, Theorem 1.11]{Ma.Pu2021}}]
\label{thm: magee-puder}
For any word $w=v^k$ with $v\in\F_r^\mathrm{np}$ and $k\in\N$ we have
    \begin{align*}
        \E\ \tr\ w(\rn,\rn^*) = \begin{cases}
1 & \text{if $w=\mathrm{id}$,}\\
n^{-1}d(k/2) + O_w(n^{-2}) & \text{if $k$ is even,}\\
O_w(n^{-2}) & \text{otherwise.}
        \end{cases}
    \end{align*}
Here $d(k/2)$ is the number of divisors of $k/2$. Additionally, one may write 
$$\E\ \Tr\ w(\rn,\rn^*)=\frac{f_w(\frac{1}{n})}{g_{|w|}(\frac{1}{n})},$$ 
for any $n>|w|$, where $f_w$ and $g_{|w|}$ are polynomials of degree at most $r|w|$, depending on $w$ and $|w|$ respectively. In fact, 
    $$g_{|w|}(x) = \left(\prod_{i=1}^{|w|}\left(1-\frac{j}{n}\right)\right)^r.$$
\end{theorem}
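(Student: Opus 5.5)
The plan is to reduce the computation to counting embeddings of graphs, following the Nica / Linial--Puder approach for random permutations, and to add a parity condition coming from the random signs. First write each $R_j^n = D_j S_j$, where the $S_j$ are independent uniform permutation matrices and the $D_j$ are independent uniform diagonal $\pm 1$ matrices, independent of the $S_j$. Expanding $\Tr\, w(\rn,\rn^*)$ as a sum over closed paths gives a sum over indices $i_0,i_1,\ldots,i_{|w|}=i_0$ that are compatible with the letters of $w$ under the permutations. Each such path carries the weight $\prod \epsilon_{j}(\cdot)$, the product of the signs met along its steps.

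Condition on the permutations and average over the signs first. Each sign variable $\epsilon_j(a)$ is attached to a single edge $a \xrightarrow{j} S_j(a)$ of the random Schreier graph, and inverting a letter uses the same sign. So the sign expectation of a closed path is $1$ if the path crosses every edge an even number of times, regardless of direction, and $0$ otherwise.

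Next group the closed paths according to the labelled graph they trace. Such a graph is a quotient $\Gamma$ of the cycle graph $C_w$ of length $|w|$ spelling $w$, where folding is allowed (after cyclic reduction we may assume $w$ is cyclically reduced). This gives
\[
\E\,\Tr\, w(\rn,\rn^*) = \sum_{\Gamma} \1\{\text{the image of } C_w \text{ covers every edge of } \Gamma \text{ evenly}\}\,\E\,\#\{\text{injective label-preserving embeddings } \Gamma \hookrightarrow \text{Schreier graph}\}.
\]
Suppose $\Gamma$ has $V$ vertices and $E_j$ edges labelled $j$. The expected number of embeddings is then
\[
\frac{(n)_V}{\prod_{j=1}^r (n)_{E_j}} = n^{V-E}\,\frac{\prod_{i<V}(1-i/n)}{\prod_j\prod_{i<E_j}(1-i/n)},
\]
and this is valid as long as $n>|w|\ge E_j$. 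Since $E_j \le |w|$, all the denominators divide $g_{|w|}(1/n)$. The numerators, after clearing denominators, are polynomials in $1/n$ of degree at most $r|w|$. Summing over the finitely many $\Gamma$ gives the claimed form $f_w(1/n)/g_{|w|}(1/n)$.

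For the asymptotics, each term has order $n^{\chi(\Gamma)}$, where $\chi(\Gamma) = V-E$. Since $\Gamma$ is a connected quotient of a circle, $\chi(\Gamma) \le 0$ unless $w=\mathrm{id}$; in that case $\Gamma$ is a point and the trace is exactly $n$. The terms with $\chi(\Gamma)<0$ contribute $O_w(n^{-1})$ to $\Tr$, hence $O_w(n^{-2})$ to $\tr$.

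The main point is to classify the $\chi(\Gamma)=0$ quotients. Such a $\Gamma$ is a cycle, and the closed path wraps around it $m$ times, so $w=u^m$ where $u$ is the label of the cycle. Because $v$ is not a proper power, $u=v^{k/m}$ with $m \mid k$. Each edge is traversed exactly $m$ times, so the parity condition is exactly that $m$ is even. Conversely, each even divisor $m$ of $k$ gives exactly one such quotient, namely the cycle spelling $v^{k/m}$, and its term equals $1+O(1/n)$.

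The number of even divisors $m$ of $k$ is $d(k/2)$ when $k$ is even, and zero otherwise. Dividing by $n$ gives the three cases of Theorem \ref{thm: magee-puder}. The delicate point I expect is making sure the cyclic-reduction and quotient bookkeeping produces exactly one $\Gamma$ for each even $m$, with no overcounting from the rotational symmetry of the cycle. I would handle this by fixing the basepoint $i_0$ and counting the paths directly rather than counting graphs up to isomorphism.
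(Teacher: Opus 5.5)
Your proof is correct. It is more self-contained than the paper's, which establishes this statement purely by citation to Magee--Puder.

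\emph{Rational form.} The paper quotes their Lemma 3.1: the sum over graphs $\Gamma$ of $(n)_{|V(\Gamma)|}/\prod_i (n)_{|E_i(\Gamma)|}$, taken over ``a finite collection of connected graphs''. Your sign-averaging step writes $R_j^n=D_jS_j$ and observes that each sign lives on a single edge of the Schreier graph and is picked up in either direction of traversal. This rederives exactly that formula. It also identifies the collection explicitly: the based quotients of $C_w$ traced by actual paths in which every edge is covered an even number of times.

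\emph{Leading term.} The paper invokes their Theorem 1.11 and ``unpacks $\chi_2(w)$'', asserting without detail that the prefactor can be computed. You instead classify the $\chi(\Gamma)=0$ quotients directly. After cyclic reduction the traced path is cyclically non-backtracking, so $\Gamma$ has no leaves and is a cycle. The path winds around it $m\mid k$ times in one direction, using each edge exactly $m$ times, and the count $d(k/2)$ of even divisors is then transparent.

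The citation is shorter and places the statement in Magee--Puder's general framework. Your route is elementary and shows where the prefactor comes from. Fixing the basepoint and summing over partitions of $\{0,\ldots,|w|-1\}$, as you propose, does remove the overcounting worry.

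There are two small bookkeeping points. First, the closed form $(n)_V/\prod_j (n)_{E_j}$ for the expected number of embeddings holds only when $\Gamma$ is folded, i.e.\ no vertex has two outgoing or two incoming edges with the same label. This is automatic for graphs that are actually traced, but you should say it, since for an arbitrary quotient the count is $0$.

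Second, and more substantively, the degree bound ``at most $r|w|$'' that you assert (as does the paper) fails with $g_{|w|}(x)=\bigl(\prod_{j=1}^{|w|}(1-jx)\bigr)^r$. After clearing denominators, the term for $\Gamma$ has degree $r|w|+r-1-\#\{j:E_j=0\}$. For example, $w=g_1^2g_2^2$ gives $\E\,\Tr\,w=1/n$ exactly, since $\E\,(R^n_j)^2=\frac1n I$. Hence $f_w(x)=x\,g_4(x)$ has degree $9>8$. Using $\bigl(\prod_{j=1}^{|w|-1}(1-jx)\bigr)^r$ as the denominator instead gives degree at most $r|w|-1$. In any case, only a bound linear in $|w|$ is needed for the polynomial method downstream.
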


\begin{proof}
    The first statement is exactly \cite[Thm. 1.11]{Ma.Pu2021} after unpacking the definition of $\chi_2(w)$ therein, and computing the prefactor of the leading order explicitly. The second claim is \cite[Lemma 3.1]{Ma.Pu2021} that shows that whenever $n>|w|$,
        \begin{align*}
            \E\ \Tr\ w(\rn,\rn^*) &= \sum_\Gamma \frac{n(n-1)\cdots (n-|V(\Gamma)|+1)}{\prod_{i=1}^r n(n-1)\cdots(n-|E_i(\Gamma)|+1)}\\
            &=\frac{\sum_\Gamma \frac{1}{n}^{|E(\Gamma)|-|V(\Gamma)|}\prod_{j=1}^{|w|-1}\left(1-\frac{j}{n}\right)\prod_{j=1}^r\prod_{j=|E_i(\Gamma)|}^{|w|}\left(1-\frac{j}{n}\right)}{\left(\prod_{j=1}^{|w|}\left(1-\frac{j}{n}\right)\right)^r},
        \end{align*}
    where the summation is over a finite (depending on $w$) collection of connected graphs with edges $E(\Gamma)$ labelled by generators of $\F_r$ possibly with repetition, and vertices $V(\Gamma)$ labelled from $1,\ldots, n$ without repetition. Moreover, $E_i(\Gamma)$ denotes the number of edges labelled by $g_i$ and $\sum_{i=1}^r|E_i(\Gamma)|=|E(\Gamma)|\leq |w|$. 

    The result then follows because 
        $$f_w(x) = \sum_\Gamma x^{|E(\Gamma)|-|V(\Gamma)|}\prod_{j=1}^{|w|-1}\left(1-jx\right)\prod_{j=1}^r\prod_{j=|E_i(\Gamma)|}^{|w|}\left(1-jx\right),$$
    has degree at most $r|w|$ and likewise for $g_{|w|}(x)$.
\end{proof}

The existence of an expansion as in Proposition \ref{prop:master_ineq} for polynomial $h$ now follows from the rationality of the expected trace and the bounded degrees of the polynomials in Theorem \ref{thm: magee-puder} identically to \cite[Section 6]{chennew}, giving the following.

\begin{proposition}
\label{prop: master-polys}
    Fix a non-commutative polynomial $P\in \R_{\geq 0}\langle\mathbf{x},\mathbf{x}^*\rangle$ of degree $q_0$. There exist linear functionals $\nu_i$ on the space of polynomials for every $i\in\N\cup\{0\}$ such that for every $m\geq 1$ and $h$ a polynomial of degree at most $q$,
        \begin{align*}
            \left|\E\ \tr\left(h\circ P\left(\rn,\rn^*\right)\right)-\sum_{i=0}^{m-1}\frac{\nu_i(h)}{n^i}\right| \leq \frac{(4qq_0r)^{4m}}{n^m}\left\|h\right\|_{C^0([-K,K])},
        \end{align*}
    where $K=\|P(\g,\g^*)\|_{C^*(\F_r)}$ for any $n\geq 1$. 
\end{proposition}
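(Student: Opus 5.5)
We follow the scheme of \cite[Section 6]{chennew}. The statement has two ingredients: the function $n\mapsto \E\,\tr\, h\circ P(\rn,\rn^*)$ is a rational function of $1/n$ with controlled degree, and the quantity is trivially bounded by $\|h\|_{C^0([-K,K])}$. Combining these through a Markov-brothers type inequality for rational functions gives control on all Taylor coefficients in $1/n$.

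\emph{Step 1 (rationality).} Expand $h\circ P$ into words $w$ with nonnegative real coefficients, each of length at most $qq_0$. By Theorem \ref{thm: magee-puder}, for $n>qq_0$ each $\E\,\tr\, w(\rn,\rn^*)$ equals $\frac{x f_w(x)}{g_{|w|}(x)}$ with $x=1/n$. We put everything over the common denominator
\[
G(x)=\Big(\prod_{j=1}^{qq_0}(1-jx)\Big)^{r},
\]
obtaining $\E\,\tr\, h\circ P(\rn,\rn^*) = \Phi_h(1/n)$ with $\Phi_h=F_h/G$. Here $F_h$ and $G$ are polynomials of degree at most $rqq_0+1$, and $G$ has no zeros on $[0,1/(qq_0)]$. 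We then define $\nu_i(h)=\Phi_h^{(i)}(0)/i!$; these functionals are linear in $h$.

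\emph{Step 2 (a priori bound).} Since $\rho_n$ factors through the universal $C^*$-algebra, $\|P(\rn,\rn^*)\|\le \|P(\g,\g^*)\|_{C^*(\F_r)}=K$. Hence
\[
|\Phi_h(1/n)|\le \|h\|_{C^0([-K,K])} \quad\text{for all integers } n.
\]

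\emph{Step 3 (extrapolation).} The goal is to bound $\sup_{x\in[0,1/M]}|\Phi_h^{(m)}(x)|$ for a suitable $M\asymp qq_0 r$. Once this is done, Taylor's theorem with remainder at $x=0$ yields the claimed error $\frac{C^m}{n^m}\|h\|_{C^0}$ for $n\ge M$. For $n<M$ the bound is trivial, since the right-hand side $(4qq_0r)^{4m}/n^m\cdot\|h\|$ dominates both $|\E\,\tr|$ and the partial sum; for the partial sum this requires a separate bound $|\nu_i(h)|\le (Cqq_0r)^{Ci}\|h\|$, which again comes from the derivative bound.

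The derivative bound itself is obtained as in \cite[Lemma 4.x/6.x]{chennew}. The rational-function issue is removed by multiplying through by $G$: one applies the Markov inequality and its discrete version, namely that a polynomial of degree $D$ bounded on the points $\{1/n : n\ge M\}$ is bounded on the whole interval $[0,1/M]$ by a constant times the same bound, valid once $M\gtrsim D^2$. Derivatives of $1/G$ are then controlled explicitly on $[0,1/(2qq_0)]$. Tracking the constants produces $(4qq_0r)^{4m}$.

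\emph{Main obstacle.} The delicate point is the discrete-to-continuous step. The values are known only at the reciprocals $1/n$, which accumulate at $0$, whereas Markov's inequality needs a bound on an interval. We would first try the lemma of \cite{chennew}, which asserts that for a polynomial $f$ of degree $D$ and $M\ge D^2$,
\[
\sup_{[0,1/M]}|f|\le 2\max_{n\ge M}|f(1/n)|,
\]
applying it to $F_h=\Phi_h G$ and using that $|G|\le 1$ on the grid. After that, all remaining constants come from routine bookkeeping.
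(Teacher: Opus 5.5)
Your proposal is correct and is essentially the paper's argument. The paper simply invokes \cite[Section 6]{chennew} together with the rationality and degree bounds of Theorem~\ref{thm: magee-puder}, and your scheme is exactly that one: a common denominator $G$, the a priori bound via the universal norm $K$, a discrete Markov lemma applied to the numerator $F_h$, derivative bounds with Taylor expansion, and the trivial bound for $n<M$. One bookkeeping caution: take $M\gtrsim (rqq_0)^2$ consistently, not $M\asymp qq_0r$, and bound the derivatives of $1/G$ only on $[0,1/M]$, not on $[0,1/(2qq_0)]$. On the larger interval $1/G$ is exponentially large in $rqq_0$, and $G$ vanishes at $x=1/(qq_0)$, so working there would lose the constant $(4qq_0r)^{4m}$.
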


The extension to the existence of compactly supported distributions for which the expansion in Proposition \ref{prop:master_ineq} for smooth functions follows identically to \cite[Section 7]{chennew}. 

The only missing input for Proposition \ref{prop:master_ineq} is now the claim about the supports of $\nu_0$ and $\nu_1$. For $\nu_1$, we use the fact that a bound on the support of a compactly supported distribution can be computed through its moments in the sense that \cite[Lemma 4.9]{chennew} $\mathrm{supp}\ \nu_1\subseteq [-A,A]$ if 
    $$\limsup_{p\to\infty}|\nu_1(x^p)|^\frac{1}{p}= A.$$ 

Towards this, we will use the first part of Theorem \ref{thm: magee-puder} to obtain an expression for the moments of $\nu_1$. Let $q$ be the degree of the polynomial $P$, then we can write
    $$P(\mathbf{x},\mathbf{x}^*)=\sum_{i_1,\ldots,i_q=0}^{2r}a_{i_1,\ldots,i_q}x_{i_1}\cdots x_{i_q},$$
where we have written $x_i=x_{i-r}^*$ for $i=r+1,\ldots,2r$ for brevity.

\begin{proposition}
\label{prop: nu_1}
    For all $p\in\N$ we have
    \begin{align*}
        \nu_1(x^p) = \sum_{k=1}^{\lfloor \frac{pq}{2}\rfloor} d(k)\sum_{v\in \F_r^{\mathrm{np}}}\E_{\mathbf{I}_1,\ldots,\mathbf{I}_p}\left[\1_{g_{\mathbf{I}_1}\cdots g_{\mathbf{I}_{p}}=v^{2k}}\right],
    \end{align*}
    where each $\mathbf{I}_s=(I_{(s-1)q+1},\ldots,I_{sq})$ is an independent random vector with distribution specified by $\P(\mathbf{I}_s=(i_{(s-1)q+1},\ldots,i_{sq}))=a_{i_{(s-1)q+1},\ldots,i_{sq}}$ for each $s\in\N$.
\end{proposition}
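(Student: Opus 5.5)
The plan is to identify $\nu_1(x^p)$ as the coefficient of $n^{-1}$ in the expansion of $\E\,\tr\,P(\rn,\rn^*)^p$ as a function of $1/n$, and then read that coefficient off word by word from Theorem \ref{thm: magee-puder}. Since $h(x)=x^p$ is a polynomial, Proposition \ref{prop: master-polys} gives
\[
\E\,\tr\,P(\rn,\rn^*)^p=\nu_0(x^p)+\frac{\nu_1(x^p)}{n}+O(n^{-2}).
\]
The functionals $\nu_i$ are uniquely determined by this asymptotic expansion, as is standard in \cite{chennew}. So it suffices to compute the $n^{-1}$ term of the left-hand side.

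First expand the power. Multiplying out the $p$ copies of $P$ gives
\[
P(\rn,\rn^*)^p=\sum_{i_1,\ldots,i_{pq}} a_{i_1,\ldots,i_q}\cdots a_{i_{(p-1)q+1},\ldots,i_{pq}}\; x_{i_1}\cdots x_{i_{pq}}\Big|_{\mathbf{x}=\rn}.
\]
Each monomial is $w(\rn,\rn^*)$ for the word $w=g_{\mathbf{I}_1}\cdots g_{\mathbf{I}_p}\in\F_r$, after free reduction. Free reduction is harmless, since the matrices are unitary and $R^*=R^{-1}$. The coefficients are nonnegative and sum to one. I take this normalisation from the positivization reduction, and any index $0$ is treated as the empty letter (identity). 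Hence the product of coefficients is exactly the probability that $(\mathbf{I}_1,\ldots,\mathbf{I}_p)$ takes these values, and
\[
\E\,\tr\,P(\rn,\rn^*)^p=\E_{\mathbf{I}_1,\ldots,\mathbf{I}_p}\big[\E\,\tr\, w_{\mathbf{I}}(\rn,\rn^*)\big].
\]

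Next, apply Theorem \ref{thm: magee-puder} to each of the finitely many words $w_{\mathbf{I}}$:
\begin{itemize}
\item the identity word contributes only to $\nu_0$;
\item a word $v^{j}$ with $v\in\F_r^{\mathrm{np}}$ and $j$ odd contributes $O(n^{-2})$;
\item a word $v^{2k}$ contributes $d(k)n^{-1}+O(n^{-2})$.
\end{itemize}
The error terms $O_w(n^{-2})$ depend on $w$, but only finitely many words occur for fixed $p$. Their sum is therefore still $O(n^{-2})$, and comparing $n^{-1}$ coefficients is legitimate.

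It remains to organise the terms. Every nontrivial $w$ is uniquely $v^j$ with $v$ non-power and $j\ge1$; this uses uniqueness of roots in free groups. So
\[
\nu_1(x^p)=\sum_{k\ge1} d(k)\sum_{v\in\F_r^{\mathrm{np}}}\P\big(g_{\mathbf{I}_1}\cdots g_{\mathbf{I}_p}=v^{2k}\big).
\]
The word length is at most $pq$, so $2k\le 2k|v|\le pq$. This truncates the sum at $k\le\lfloor pq/2\rfloor$, which is the claimed formula.

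The main obstacle is not the combinatorics but justifying that $\nu_1(x^p)$ equals the $n^{-1}$ coefficient of the exact rational expansion. This needs the uniqueness of asymptotic coefficients and the fact that the $\nu_i$ of Proposition \ref{prop: master-polys} are defined this way in \cite[Section 6]{chennew}. The second half of Theorem \ref{thm: magee-puder} handles this: the expected trace is a rational function of $1/n$, analytic near $0$, so its Taylor coefficients are well defined.
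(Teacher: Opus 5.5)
Your proposal is correct and follows essentially the paper's route: expand $P(\rn,\rn^*)^p$ into words weighted by products of the coefficients, apply Theorem \ref{thm: magee-puder} word by word, and extract the $n^{-1}$ coefficient. The paper phrases this last step as $\nu_1(x^p)=\lim_{n\to\infty} n\bigl(\E\,\tr\,P(\rn,\rn^*)^p-\nu_0(x^p)\bigr)$, with $\nu_0(x^p)=\tau(P(\g,\g^*)^p)$ cancelling exactly the identity words. One small correction: the equality $|v^{2k}|=2k|v|$ fails when $v$ is not cyclically reduced (e.g.\ $v=g_1g_2g_1^{-1}$), so your step $2k|v|\le pq$ is unjustified. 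However, $|v^{2k}|\ge 2k$ holds for every nontrivial $v$, and that is all the truncation $k\le\lfloor pq/2\rfloor$ needs.
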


\begin{proof}
From Proposition \ref{prop: master-polys} we have 
         $$\nu_1(x^p) = \lim_{n\to\infty} n\left(\E\ \tr(P(\rn,\rn^*)^p)-\nu_0(x^p)\right).$$
Then,
\begin{align*}
             \E\ &\tr(P(\rn,\rn^*)^p)-\nu_0(x^p) \\
             &= \sum_{i_1,\ldots,i_{pq}=0}^{2r} a_{i_1,\ldots,i_q}\cdots a_{i_{(p-1)q+1},\ldots,i_{pq}}\E\ \tr(w_{i_1,\ldots,i_{pq}}(\rn,\rn^*))-\nu_0(x^p)\\
             &=\sum_{\substack{i_1,\ldots,i_{pq}=0 \\ g_{i_1}\cdots g_{i_{pq}}\neq e}}^{2r} a_{i_1,\ldots,i_q}\cdots a_{i_{(p-1)q+1},\ldots,i_{pq}}\E\ \tr(w_{i_1,\ldots,i_{pq}}(\rn,\rn^*)),
         \end{align*}
     where we write $w_{i_1,\ldots,i_{pq}}$ for the word in the free group given by $g_{i_1}\cdots g_{i_{pq}}$. The second equality follows from the fact that
        $$\nu_0(x^p)=\lim_{n\to\infty}\E\ \tr(P(\rn,\rn^*)^p) =\langle \delta_e,\lambda(P(\g,\g^*))\delta_e\rangle=\tau(P(\g,\g^*)^p),$$
    where the first equality is due to Proposition \ref{prop: master-polys} and the second is due to Theorem \ref{thm: magee-puder}. Here $\tau$ is the trace on $C^*_{\mathrm{red}}(\F_r)$.
    Thus $\nu_0(x^p)$ is equal to the sum of the coefficients of the words $w_{i_1,\ldots,i_{pq}}$ that reduce to the identity, and this matches the expected trace for such words and yields the cancellation.

     After multiplying through by $n$ and taking the limit $n\to\infty$,  Theorem \ref{thm: magee-puder}, states that the only non-zero terms are those for which the words $w_{i_1,\ldots,i_{pq}}$ are an even power $2k$ of a non-proper power in $\F_r$ and in this case, the contribution is $d(k)$. Since $\deg P=q$ and we take the $p$th power, we must have $k\leq\lfloor\frac{pq}{2}\rfloor$. Thus,
     
     $$\nu_1(x^p) = \sum_{k=1}^{\lfloor \frac{pq}{2}\rfloor} d(k)\sum_{v\in \F_r^{\mathrm{np}}}\sum_{i_1,\ldots,i_{pq}=0}^{2r} a_{i_1,\ldots,i_q}\cdots a_{i_{(p-1)q},\ldots,i_{pq}}\1_{g_{i_1}\cdots g_{i_{pq}}=v^{2k}}.$$

     If we define the random vectors $\mathbf{I}_s$ as in the proposition statement, then it follows that
        $$\E_{\mathbf{I}_1,\ldots,\mathbf{I}_p}\left[\1_{g_{\mathbf{I}_1}\cdots g_{\mathbf{I}_{p}}=v^{2k}}\right]=\sum_{i_1,\ldots,i_{pq}=0}^{2r} a_{i_1,\ldots,i_q}\cdots a_{i_{(p-1)q},\ldots,i_{pq}}\1_{g_{i_1}\cdots g_{i_{pq}}=v^{2k}},$$
    from which the result follows.
    
\end{proof}

The proof of Proposition \ref{prop:master_ineq} now readily follows.

\begin{proof}[Proof of Proposition \ref{prop:master_ineq}]
The expansion is given by the extension of Proposition \ref{prop: master-polys} to smooth functions as remarked after the proposition. To obtain the support of $\nu_0$, recall that as in the proof of Proposition \ref{prop: nu_1}, for $h$ a polynomial we have
    $$\nu_0(h) = \tau(h\circ P),$$
and so this expression for $\nu_0$ also extends to smooth functions. In particular, this proves the support for $\nu_0$.

For $\nu_1$, we use the characterization of the distribution support in terms of the moments. In particular, from the form of $\nu_1(x^p)$ given by Proposition \ref{prop: nu_1}, the computation of
    $$\limsup_{p\to\infty}|\nu_1(x^p)|^{\frac{1}{p}}=\|P(\g,\g^*)\|_{C^*_{\mathrm{red}}(\F_r)}$$
follows in a near identical manner to \cite[Proof of Lemma 3.10]{vanhandel2026strongconvergencephenomenon}.
\end{proof}

\subsection{$\Z$}
\label{subsec:Z}

For $\Z$, we can construct an explicit sequence of permutation representations that strongly converge to the regular representation. Indeed, let $C_n$ be the $n\times n$ matrix with $1$'s just above the diagonal, $1$ in the $(n,1)$ coordinate and zeroes elsewhere; that is,
    $$C_n = \begin{pmatrix}
        0 & 1 & 0 & \cdots & 0 \\
        0 & 0 & 1 & \cdots & 0 \\
        \vdots & \ddots & \ddots & \ddots & \vdots \\
        0 & 0 & 0 & \cdots & 1\\
        1 & 0 & 0 & \cdots & 0        
    \end{pmatrix}.$$
We construct the representation $\pi_n:\Z=\langle m\rangle \to \mathrm{Sym}(n)$ by mapping $m\mapsto C_n$. The following is contained in \cite[Proof of Lemma 5.1]{Lo.Ma2025}.

\begin{proposition}
The representations $\pi_n$ converge strongly to the regular representation $\lambda_{\Z}$.
\end{proposition}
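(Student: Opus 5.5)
We must show that for every $z=\sum_{k}c_k m^k\in\C[\Z]$ (a finite sum), $\|\pi_n(z)\|\to\|\lambda_\Z(z)\|$. The plan is to diagonalize both sides with Fourier analysis and reduce the claim to a Riemann-sum style approximation of a continuous function's supremum.

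First, we identify $\|\lambda_\Z(z)\|$. Under the Fourier transform $\ell^2(\Z)\cong L^2(\mathbb{T})$, the operator $\lambda_\Z(m)$ becomes multiplication by $e^{i\theta}$ (up to a sign convention in the exponent). Hence $\lambda_\Z(z)$ is unitarily equivalent to multiplication by the trigonometric polynomial
\[
\hat z(\theta)=\sum_k c_k e^{ik\theta}.
\]
Its operator norm is therefore
\[
\|\lambda_\Z(z)\| = \sup_{\theta\in[0,2\pi)}|\hat z(\theta)|,
\]
the essential supremum coinciding with the supremum because $\hat z$ is continuous.

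Second, we handle $\pi_n(z)$. The matrix $C_n$ is a unitary circulant. It is diagonalized by the discrete Fourier basis $v_j=(\omega^{jl})_{l=0}^{n-1}$ with $\omega=e^{2\pi i/n}$, and satisfies $C_n v_j=\omega^j v_j$. Hence $\pi_n(z)=\sum_k c_k C_n^k$ is a normal matrix with eigenvalues $\hat z(2\pi j/n)$ for $j=0,\dots,n-1$. This gives
\[
\|\pi_n(z)\|=\max_{0\le j<n}|\hat z(2\pi j/n)|.
\]

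Finally, we compare the two expressions. We have $\|\pi_n(z)\|\le \sup_\theta|\hat z(\theta)|$ trivially. For the reverse inequality, $\hat z$ is uniformly continuous (indeed Lipschitz with constant $\sum_k|k||c_k|$), and the grid $\{2\pi j/n\}$ has mesh $2\pi/n$. This yields
\[
\sup_\theta|\hat z(\theta)|-\|\pi_n(z)\|\le \frac{2\pi}{n}\sum_k |k||c_k|\to 0.
\]

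There is no real obstacle. The only points needing care are fixing the sign convention in the Fourier identification (which does not affect the supremum) and noting that norms of normal operators equal their spectral radii, which justifies reading the norms off from the eigenvalues and the multiplier.
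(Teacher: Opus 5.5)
Your proof is correct and complete. Diagonalizing $C_n$ in the discrete Fourier basis gives $\|\pi_n(z)\|=\max_j|\hat z(2\pi j/n)|$, and the Fourier transform on $\ell^2(\Z)$ gives $\|\lambda_\Z(z)\|=\sup_\theta|\hat z(\theta)|$. The Lipschitz estimate then closes the gap. You could even use $\pi/n$ in place of $2\pi/n$, since every point of the circle lies within $\pi/n$ of a grid point.

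The paper gives no argument of its own here. It only says the statement is contained in the proof of Lemma 5.1 of \cite{Lo.Ma2025}, so your computation is a self-contained replacement for that citation. It also has two advantages: the upper bound $\|\pi_n(z)\|\le\|\lambda_\Z(z)\|$ holds exactly for every $n$, and you get an explicit rate, $\|\lambda_\Z(z)\|-\|\pi_n(z)\|\le\frac{\pi}{n}\sum_k|k||c_k|$.

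For comparison, a more conceptual route avoids diagonalization. It works for any residually finite amenable group whose finite quotients exhaust it, including non-abelian ones where Fourier analysis is unavailable.
\begin{itemize}
\item The upper bound $\|\pi_n(z)\|\le\|\lambda_\Z(z)\|$ follows from amenability, since $C^*(\Z)=C^*_{\mathrm{red}}(\Z)$.
\item For the lower bound, use $\tr(C_n^k)=0$ for $0<|k|<n$. For $n$ large relative to $p$ and the support of $z$, this gives $\|\pi_n(z)\|^{2p}\ge\tr\big((\pi_n(z)^*\pi_n(z))^p\big)=\tau\big((z^*z)^p\big)$.
\item Finally, $\tau\big((z^*z)^p\big)^{1/2p}\to\|\lambda_\Z(z)\|$ as $p\to\infty$, by faithfulness of the trace $\tau$.
\end{itemize}
For $\Z$ itself, your explicit spectral computation is the cleanest and most informative option.
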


\section{Some examples}
\label{sec: example}

An example of finite covolume hyperbolic RACG can be obtained by considering the group generated by the reflections along the faces of a right-angled hyperbolic polyhedron. An interesting infinite family of these are the so-called \textit{Löbell polyhedra} $R(n)$. For every $n\geq 5$, $R(n)$ is a bounded polyhedron whose top and bottom faces are $n-$gons and whose lateral surface consists of $2n$ pentagons.
The corresponding reflection groups $G(n)$ generate a RACG, and their quotients $\mathbb{H}^3 / G(n)$ are compact hyperbolic 3-orbifolds, called Löbell orbifolds. The remainder of this section is based on \cite{surveyVesnin}, which includes a very rich description of these manifolds.

In order to obtain a closed manifold, one needs to find a small index subgroup in $G(n)$ without torsion elements. One can do so by following the general method introduced in \cite{Vesnin}, which consists of constructing a homomorphism $\varphi$ from the RACG to a finite Abelian group whose kernel is torsion free. More precisely, the Abelian group taken here is the stabiliser of a point, isomorphic to $\mathbb{Z}_2^3$. This map $\varphi$ can be described by a choice of colouring of the faces of the polyhedron. Under some condition of local linear independence (see \cite[Lemma 1]{Vesnin}) one obtains that the quotient $\mathbb{H}^3 / \text{Ker}(\varphi)$ is a closed hyperbolic 3-manifold, called Löbell manifold. This can be orientable or not; the condition for being orientable is given by \cite[Lemma 2]{Vesnin}.

Let us present now a concrete example by setting $n=5$. The polyhedron $R(5)$ corresponds to the right-angled hyperbolic dodecahedron. Its reflection group $G(5)$ has then the following presentation:
\begin{align*}
    G(5) =  \langle r_1, \ldots, r_{12}\  | \ &r_i^2 =1, \ \ i=1,\ldots,12; \\
    &r_{11}r_i = r_{11}r_i, \ r_{12}r_{5+i}=r_{5+i}r_{12}, \ i=1,\ldots,5; \\
    &r_ir_{i+1} = r_{i+1}r_i, \  i=1,\ldots,9, \ r_1r_5=r_5r_1; \\
    &r_ir_{5+i}=r_{5+i}r_i, \ i=1,\ldots,5, \ r_6r_{10}=r_{10}r_6; \\
    &r_ir_{6+i}=r_{6+i}r_i \ i=1,\ldots,4\rangle.
\end{align*}

The quotient $\mathbb{H}^3/G(5)$ is a compact hyperbolic 3-orbifold which can be visualized as the polyhedron itself but with its faces replaced by mirrors. 

In order to get an orientable closed manifold, we can describe the homomorphism $\varphi: G(5) \rightarrow \mathbb{Z}_2^3$ by the following 4-colouring (see also Figure \ref{dodecahedron}): let $\alpha, \beta, \gamma$ be three linearly independent elements of $\mathbb{Z}_2^3$, and $\delta = \alpha + \beta + \gamma$. Then $\varphi$ is defined by:
\[
\begin{aligned}
\varphi(r_{6})  &= \varphi(r_{8})  = \varphi(r_{11}) = \alpha,\\
\varphi(r_{1})  &= \varphi(r_{3})  = \varphi(r_{10}) = \beta,\\
\varphi(r_{2})  &= \varphi(r_{4})  = \varphi(r_{12}) = \gamma,\\
\varphi(r_{5})  &= \varphi(r_{7})  = \varphi(r_{9})  = \delta.
\end{aligned}
\]
By \cite[Theorem 1]{Vesnin} one has that $\mathbb{H}^3/\text{Ker}(\varphi)$ is an orientable closed hyperbolic 3-manifold. 

The colouring induces a decomposition of the set of faces into equivalence classes of those with the same colour. We say that two $\mathbb{Z}_2^3-$colourings are equivalent if some symmetry of the polyhedron takes equivalence classes relative to one colour to another. For our example, $G(5)$, there are 25 possible colourings of $R(5)$, of which only the one shown above gives an orientable manifold. Moreover, all closed manifolds corresponding to the same equivalence class are isometric, by \cite[Theorem 3.3]{Garrison-Scott}.

\begin{figure}[H]
\centering
\begin{tikzpicture}[line width=1pt, scale=0.4]
  \coordinate (A0) at (0.00000,5.00000);
  \coordinate (B0) at (-3.81928,1.24096);
  \coordinate (C0) at (-1.61473,2.22249);
  \coordinate (D0) at (-0.88612,1.21964);
  \coordinate (A1) at (-4.75528,1.54508);
  \coordinate (B1) at (-2.36044,-3.24887);
  \coordinate (C1) at (-2.61269,-0.84891);
  \coordinate (D1) at (-1.43377,-0.46586);
  \coordinate (A2) at (-2.93893,-4.04508);
  \coordinate (B2) at (2.36044,-3.24887);
  \coordinate (C2) at (0.00000,-2.74714);
  \coordinate (D2) at (0.00000,-1.50756);
  \coordinate (A3) at (2.93893,-4.04508);
  \coordinate (B3) at (3.81928,1.24096);
  \coordinate (C3) at (2.61269,-0.84891);
  \coordinate (D3) at (1.43377,-0.46586);
  \coordinate (A4) at (4.75528,1.54508);
  \coordinate (B4) at (0.00000,4.01583);
  \coordinate (C4) at (1.61473,2.22249);
  \coordinate (D4) at (0.88612,1.21964);

  \foreach \i in {0,...,4}{
    \pgfmathtruncatemacro{\inext}{mod(\i+1,5)}
    \draw (A\i) -- (A\inext);
    \draw (B\i) -- (A\inext);
    \draw (B\i) -- (C\i);
    \draw (B\i) -- (C\inext);
    \draw (C\i) -- (D\i);
    \draw (D\i) -- (D\inext);
  }

  \node[font=\footnotesize] at (0,5.9) {$12$};              
  \node[font=\footnotesize] at (-2.03786,2.80487) {$6$};
  \node[font=\footnotesize] at (-3.29732,-1.07137) {$10$};
  \node[font=\footnotesize] at (0.00000,-3.46701) {$9$};
  \node[font=\footnotesize] at (3.29732,-1.07137) {$8$};
  \node[font=\footnotesize] at (2.03786,2.80487) {$7$};
  \node[font=\footnotesize] at (-2.07332,0.67366) {$5$};
  \node[font=\footnotesize] at (-1.28138,-1.76367) {$4$};
  \node[font=\footnotesize] at (1.28138,-1.76367) {$3$};
  \node[font=\footnotesize] at (2.07332,0.67366) {$2$};
  \node[font=\footnotesize] at (0.00000,2.18002) {$1$};
  \node[font=\footnotesize] at (0,0) {$11$};                

  \fill (A0) circle (2.2pt);
  \fill (B0) circle (2.2pt);
  \fill (C0) circle (2.2pt);
  \fill (D0) circle (2.2pt);
  \fill (A1) circle (2.2pt);
  \fill (B1) circle (2.2pt);
  \fill (C1) circle (2.2pt);
  \fill (D1) circle (2.2pt);
  \fill (A2) circle (2.2pt);
  \fill (B2) circle (2.2pt);
  \fill (C2) circle (2.2pt);
  \fill (D2) circle (2.2pt);
  \fill (A3) circle (2.2pt);
  \fill (B3) circle (2.2pt);
  \fill (C3) circle (2.2pt);
  \fill (D3) circle (2.2pt);
  \fill (A4) circle (2.2pt);
  \fill (B4) circle (2.2pt);
  \fill (C4) circle (2.2pt);
  \fill (D4) circle (2.2pt);
\end{tikzpicture}
\qquad \qquad
\begin{tikzpicture}[line width=1pt, scale=0.4]

\colorlet{alphacolor}{red!30}
\colorlet{betacolor}{blue!25}
\colorlet{gammacolor}{green!25}
\colorlet{deltacolor}{yellow!35}

  \coordinate (A0) at (0.00000,5.00000);
  \coordinate (B0) at (-3.81928,1.24096);
  \coordinate (C0) at (-1.61473,2.22249);
  \coordinate (D0) at (-0.88612,1.21964);
  \coordinate (A1) at (-4.75528,1.54508);
  \coordinate (B1) at (-2.36044,-3.24887);
  \coordinate (C1) at (-2.61269,-0.84891);
  \coordinate (D1) at (-1.43377,-0.46586);
  \coordinate (A2) at (-2.93893,-4.04508);
  \coordinate (B2) at (2.36044,-3.24887);
  \coordinate (C2) at (0.00000,-2.74714);
  \coordinate (D2) at (0.00000,-1.50756);
  \coordinate (A3) at (2.93893,-4.04508);
  \coordinate (B3) at (3.81928,1.24096);
  \coordinate (C3) at (2.61269,-0.84891);
  \coordinate (D3) at (1.43377,-0.46586);
  \coordinate (A4) at (4.75528,1.54508);
  \coordinate (B4) at (0.00000,4.01583);
  \coordinate (C4) at (1.61473,2.22249);
  \coordinate (D4) at (0.88612,1.21964);

  \fill[gammacolor] (A0)--(A1)--(A2)--(A3)--(A4)--cycle;                
  \fill[alphacolor] (A0)--(A1)--(B0)--(C0)--(B4)--cycle;                
  \fill[betacolor]  (A1)--(A2)--(B1)--(C1)--(B0)--cycle;                
  \fill[deltacolor] (A2)--(A3)--(B2)--(C2)--(B1)--cycle;                
  \fill[alphacolor] (A3)--(A4)--(B3)--(C3)--(B2)--cycle;                
  \fill[deltacolor] (A4)--(A0)--(B4)--(C4)--(B3)--cycle;                
  \fill[deltacolor] (B0)--(C0)--(D0)--(D1)--(C1)--cycle;                
  \fill[gammacolor] (B1)--(C1)--(D1)--(D2)--(C2)--cycle;                
  \fill[betacolor]  (B2)--(C2)--(D2)--(D3)--(C3)--cycle;                
  \fill[gammacolor] (B3)--(C3)--(D3)--(D4)--(C4)--cycle;                
  \fill[betacolor]  (B4)--(C4)--(D4)--(D0)--(C0)--cycle;                
  \fill[alphacolor] (D0)--(D1)--(D2)--(D3)--(D4)--cycle;                

  \foreach \i in {0,...,4}{
    \pgfmathtruncatemacro{\inext}{mod(\i+1,5)}
    \draw (A\i) -- (A\inext);
    \draw (B\i) -- (A\inext);
    \draw (B\i) -- (C\i);
    \draw (B\i) -- (C\inext);
    \draw (C\i) -- (D\i);
    \draw (D\i) -- (D\inext);
  }

  \node[font=\footnotesize] at (0,5.9) {$\gamma$};              
  \node[font=\footnotesize] at (-2.03786,2.80487) {$\alpha$};
  \node[font=\footnotesize] at (-3.29732,-1.07137) {$\beta$};
  \node[font=\footnotesize] at (0.00000,-3.46701) {$\delta$};
  \node[font=\footnotesize] at (3.29732,-1.07137) {$\alpha$};
  \node[font=\footnotesize] at (2.03786,2.80487) {$\delta$};
  \node[font=\footnotesize] at (-2.07332,0.67366) {$\delta$};
  \node[font=\footnotesize] at (-1.28138,-1.76367) {$\gamma$};
  \node[font=\footnotesize] at (1.28138,-1.76367) {$\beta$};
  \node[font=\footnotesize] at (2.07332,0.67366) {$\gamma$};
  \node[font=\footnotesize] at (0.00000,2.18002) {$\beta$};
  \node[font=\footnotesize] at (0,0) {$\alpha$};                

  \fill (A0) circle (2.2pt);
  \fill (B0) circle (2.2pt);
  \fill (C0) circle (2.2pt);
  \fill (D0) circle (2.2pt);
  \fill (A1) circle (2.2pt);
  \fill (B1) circle (2.2pt);
  \fill (C1) circle (2.2pt);
  \fill (D1) circle (2.2pt);
  \fill (A2) circle (2.2pt);
  \fill (B2) circle (2.2pt);
  \fill (C2) circle (2.2pt);
  \fill (D2) circle (2.2pt);
  \fill (A3) circle (2.2pt);
  \fill (B3) circle (2.2pt);
  \fill (C3) circle (2.2pt);
  \fill (D3) circle (2.2pt);
  \fill (A4) circle (2.2pt);
  \fill (B4) circle (2.2pt);
  \fill (C4) circle (2.2pt);
  \fill (D4) circle (2.2pt);
\end{tikzpicture}
\captionsetup{width=0.75\textwidth}
\caption{{\small Faces of the dodecahedron R(5) (left) and the corresponding 4-coloring $\varphi$ described above (right).}}
\label{dodecahedron}
\end{figure}

One last important property of our example is that it is an arithmetic manifold. In general, the RACGs $G(n)$ are arithmetic if and only if $n=5,6,8$ (see \cite[Lemma 3.8]{surveyVesnin}).

\bibliographystyle{amsalpha}
\bibliography{reference}

\begin{tabular}{@{}l@{}}%
Michael Magee \\
Department of Mathematical Sciences, 
Durham University, DH1 3LE Durham, UK \\
\texttt{michael.r.magee@durham.ac.uk}
\end{tabular}

\vspace{1em}

\begin{tabular}{@{}l@{}}%
Anna Roig-Sanchis \\
Laboratoire J.A. Dieudonn\'{e},
Universit\'{e} C\^{o}te d'Azur, CNRS, 06108, Nice, France \\
\texttt{anna.roig-sanchis@univ-cotedazur.fr}
\end{tabular}

\vspace{1em}

\begin{tabular}{@{}l@{}}%
Joe Thomas \\
Department of Mathematical Sciences,
Durham University, DH1 3LE Durham, UK \\
\texttt{joe.thomas@durham.ac.uk}
\end{tabular}

\end{document}